\def\bfm#1{\mbox{\boldmath$#1$}}
\theoremstyle{plain}
\newtheorem{theorem}{Theorem}[section]
\newtheorem{lemma}[theorem]{Lemma}
\newtheorem{proposition}[theorem]{Proposition}
\theoremstyle{remark}
\newtheorem*{remark}{Remark}
\begin{document}
\renewcommand{\theequation}{\thesection.\arabic {equation}}


\newcommand{\bbB}{{\mathbb{B}}}
\newcommand{\bbC}{{\mathbb{C}}}
\newcommand{\bbD}{{\mathbb{D}}}
\newcommand{\bbE}{{\mathbb{E}}}
\newcommand{\bbI}{{\mathbb{I}}}
\newcommand{\bbJ}{{\mathbb{J}}}
\newcommand{\bbK}{{\mathbb{K}}}
\newcommand{\bbN}{{\mathbb{N}}}
\newcommand{\bbO}{{\mathbb{O}}}
\newcommand{\bbR}{{\mathbb{R}}}
\newcommand{\bbS}{{\mathbb{S}}}
\newcommand{\bbT}{{\mathbb{T}}}
\newcommand{\bbV}{{\mathbb{V}}}
\newcommand{\bbX}{{\mathbb{X}}}
\newcommand{\bbY}{{\mathbb{Y}}}


\newcommand{\cA}{{\cal A}}         \newcommand{\dA}[1]{{\cal A}_{#1}}
\newcommand{\cB}{{\cal B}}         \newcommand{\dB}[1]{{\cal B}_{#1}}
\newcommand{\cI}{{\cal I}}
\newcommand{\cJ}{{\cal J}}
\newcommand{\cK}{{\cal K}}
\newcommand{\cN}{{\cal N}}
\newcommand{\cO}{{\cal O}}
\newcommand{\cS}{{\cal S}}         \newcommand{\dS}[1]{{\cal S}_{#1}}
\newcommand{\cR}{{\cal R}}
\newcommand{\cT}{{\cal T}}
\newcommand{\cU}{{\cal U}}
\newcommand{\cV}{{\cal V}}
\newcommand{\cX}{{\cal X}}
\newcommand{\cY}{{\cal Y}}
\newcommand{\cZ}{{\cal Z}}


\newcommand{\f}[1]{f_{_{\tiny\rm #1}}}  \newcommand{\hf}[1]{\hat{f}_{#1}}
\newcommand{\tf}[2]{f_{{#1}}^{{#2}}}

\newcommand{\h}[1]{h_{_{\tiny\rm #1}}}
\newcommand{\n}[1]{n_{_{\tiny\rm #1}}}
\newcommand{\q}[2]{q_{_{\tiny\rm #1\!, \, #2}}}
\newcommand{\del}[1]{\delta_{_{\tiny\rm #1}}}
\newcommand{\pii}[2]{\pi_{_{\tiny\rm #1\!, \, #2}}}

\newcommand{\y}[2]{y_{#1}^{\rm #2}}
\newcommand{\yb}[1]{{\bar{y}^{\rm #1}}}
\newcommand{\Y}[1]{Y_{\rm #1}}
\newcommand{\YY}[1]{Y_{\! \mbox{\scriptsize #1}}}

\newcommand{\no}[2]{\|#1\|_{_{#2}}}
\newcommand{\nor}[3]{\|#1\|_{_{#2}}^{#3}}

\newcommand{\bismall}[2]{(\!\!\begin{array}{c} #1 \\ \vspace*{-0.6cm} \\ #2 \end{array} \!\! )}
\newcommand{\bi}[2]{\Big(\!\!\begin{array}{c} #1 \\ #2 \end{array} \!\!\Big)}
\newcommand{\twoc}[2]{\bigg(\!\!\begin{array}{c} #1 \\ #2  \end{array} \!\!\bigg)}
\newcommand{\twolr}[2]{\left(\!\!\begin{array}{c} #1 \\ #2  \end{array} \!\!\right)}

\newcommand{\twob}[2]{\lbrace {#1 \atop #2} \rbrace}
\newcommand{\twoB}[2]{\bigg\{\!\!\begin{array}{c} #1 \\ #2  \end{array} \!\!\bigg\}}

\newcommand{\fourc}[4]{\bigg(\!\!\begin{array}{cc} #1 & #2 \\ #3 & #4 \end{array} \!\!\bigg)}
\newcommand{\fourlr}[4]{\left(\!\!\begin{array}{cc} #1 & #2 \\ #3 & #4 \end{array} \!\!\right)}

\newcommand{\threec}[3]{\left(\!\!\begin{array}{c} #1 \\ #2 \\ #3  \end{array} \!\!\right)}
\newcommand{\threetwo}[6]{\left(\!\!\begin{array}{cc}
#1 & #2 \\
#3 & #4 \\
#5 & #6
\end{array} \!\!\right)}
\newcommand{\threethree}[9]{\left(\!\!\begin{array}{ccc}
#1 & #2 & #3 \\
#4 & #5 & #6 \\
#7 & #8 & #9
\end{array} \!\!\right)}

\newcommand{\sr}[1]{\stackrel{#1}{=}}
\newcommand{\mc}[1]{\multicolumn{1}{c}{#1}}
\newcommand{\MC}[3]{\multicolumn{#1}{#2}{#3}}
\newcommand{\TC}[4]{\contentsline {#1}{\numberline {#2}#3}{#4}}
\newcommand{\LIST}[3]{\contentsline {section}{\numberline {\hspace*{-0.5cm}#1}#2}{#3}}

\newcommand{\Hpi}[1]{\hat{\pi}_{_{\tiny\rm #1}}}
\newcommand{\titem}[1]{\vspace*{-0.15cm}\item[{\rm #1} ]\hspace*{0.1cm}}
\newcommand{\ub}[1]{\underline{\bf #1}}
\newcommand{\RED}[1]{{\color{red} #1}}

\newcommand{\two}[2]{{#1}_{_{\tiny\rm #2}}}
\newcommand{\three}[3]{{#1}_{_{#2, \tiny\rm #3}}}

\newcommand{\threev}[3]{\left(\!\!\begin{array}{c} #1 \\   #2 \\   #3 \end{array}
                          \!\!\right)}


\newcommand{\0}{{\bf 0}\!\!\!{\bf 0}}
\newcommand{\1}{{\bf 1}\!\!\!{\bf 1}}        \def\b1{{\bf 1\!\!\!1}}  


\newcommand{\ba}{{\bf a}}
\newcommand{\bb}{{\bf b}}
\newcommand{\bd}{{\bf d}}
\newcommand{\bm}{{\bf m}}
\newcommand{\bn}{{\bf n}}
\newcommand{\bp}{{\bf p}}
\newcommand{\bq}{{\bf q}}
\newcommand{\br}{{\bf r}}
\newcommand{\bs}{{\bf s}}
\newcommand{\bt}{{\bf t}}
\newcommand{\bu}{{\bf u}}
\newcommand{\bv}{{\bf v}}
\newcommand{\bw}{{\bf w}}
\newcommand{\bx}{{\bf x}}
\newcommand{\by}{{\bf y}}
\newcommand{\bz}{{\bf z}}

\newcommand{\bA}{{\bf A}}
\newcommand{\bB}{{\bf B}}
\newcommand{\bC}{{\bf C}}
\newcommand{\bE}{{\bf E}}
\newcommand{\bD}{{\bf D}}
\newcommand{\bH}{{\bf H}}
\newcommand{\bI}{{\bf I}}
\newcommand{\bJ}{{\bf J}}
\newcommand{\bL}{{\bf L}}
\newcommand{\bM}{{\bf M}}
\newcommand{\bN}{{\bf N}}
\newcommand{\bO}{{\bf O}}
\newcommand{\bP}{{\bf P}}
\newcommand{\bQ}{{\bf Q}}
\newcommand{\bR}{{\bf R}}
\newcommand{\bS}{{\bf S}}
\newcommand{\bT}{{\bf T}}
\newcommand{\bU}{{\bf U}}
\newcommand{\bV}{{\bf V}}
\newcommand{\bW}{{\bf W}}
\newcommand{\bX}{{\bf X}}
\newcommand{\bY}{{\bf Y}}
\newcommand{\bZ}{{\bf Z}}


\newcommand{\ibb}{{\bfm b}}
\newcommand{\ibe}{{\bfm e}}
\newcommand{\ibp}{{\bfm p}}
\newcommand{\ibt}{{\bfm t}}
\newcommand{\ibv}{{\bfm v}}
\newcommand{\ibx}{{\bfm x}}
\newcommand{\iby}{{\bfm y}}
\newcommand{\ibz}{{\bfm z}}
\newcommand{\ibu}{{\bfm u}}
\newcommand{\ibw}{{\bfm w}}
\newcommand{\ibs}{{\bfm s}}
\newcommand{\ibA}{{\bfm A}}
\newcommand{\ibB}{{\bfm B}}
\newcommand{\ibC}{{\bfm C}}
\newcommand{\ibR}{{\bfm R}}
\newcommand{\ibU}{{\bfm U}}
\newcommand{\ibV}{{\bfm V}}
\newcommand{\ibW}{{\bfm W}}
\newcommand{\ibX}{{\bfm X}}


\newcommand{\hb}{\hat{b}}
\newcommand{\hh}{\hat{h}}
\newcommand{\hm}{\hat{m}}
\newcommand{\hp}{\hat{p}}
\newcommand{\hq}{\hat{q}}
\newcommand{\hR}{\hat{R}}

\newcommand{\hbr}{\hat{\br}}

\newcommand{\hth}{\hat{\theta}}   \newcommand{\hTh}{\hat{\Theta}}
\newcommand{\hbth}{{\boldsymbol{\hth}}}
\newcommand{\hbbe}{{\boldsymbol{\hbe}}}
\newcommand{\hbp}{{\boldsymbol{\hp}}}
\newcommand{\heta}{\hat{\eta}}
\newcommand{\hvth}{\hat{\vartheta}}
\newcommand{\hvphi}{\hat{\varphi}}
\newcommand{\hmu}{\hat{\mu}}
\newcommand{\hbmu}{{\boldsymbol{\hmu}}}
\newcommand{\hbSi}{{\boldsymbol{\hSi}}}
\newcommand{\hsi}{\hat{\sigma}}   \newcommand{\hSi}{\hat{\Sigma}}
\newcommand{\hal}{\hat{\alpha}}   \newcommand{\hbe}{\hat{\beta}}
\newcommand{\hga}{\hat{\gamma}}
\newcommand{\hpsi}{\hat{\psi}}
\newcommand{\hxi}{\hat{\xi}}
\newcommand{\hpi}{\hat{\pi}}
\newcommand{\hla}{\hat{\lambda}}
\newcommand{\hbla}{{\boldsymbol{\hla}}}

\newcommand{\whVar}{\widehat{\mbox{Var}}}
\newcommand{\whse}{\widehat{\mbox{se}}}
\newcommand{\whPr}{\widehat{\Pr}}
\newcommand{\whCorr}{\widehat{\mbox{Corr}}}

\newcommand{\tD}{\tilde{D}}
\newcommand{\tx}{\tilde{x}}
\newcommand{\ty}{\tilde{y}}

\newcommand{\tth}{\tilde{\theta}}     \newcommand{\tbth}{\tilde{{\bfm \theta}}}
\newcommand{\tpi}{\tilde{\pi}}
\newcommand{\tmu}{\tilde{\mu}}
\newcommand{\tbmu}{\tilde{\bmu}}
\newcommand{\tbe}{\tilde{\beta}}
\newcommand{\tsi}{\tilde{\sigma}}     \newcommand{\tSi}{\tilde{\Sigma}}
 \newcommand{\tbSi}{\tilde{\bSi}}
\newcommand{\tpsi}{\tilde{\psi}}
\newcommand{\txi}{\tilde{\xi}}
\newcommand{\tbp}{\tilde{{\bfm p}}}
\newcommand{\tp}{\tilde{p}}

\newcommand{\Bu}{\bar{u}}
\newcommand{\Bv}{\bar{v}}
\newcommand{\Bw}{\bar{w}}
\newcommand{\Bx}{\bar{x}}
\newcommand{\By}{\bar{y}}       \newcommand{\BY}{\bar{Y}}
\newcommand{\Bz}{\bar{z}}

\newcommand{\Bxi}{\bar{\xi}}

\newcommand{\BVar}{\overline{\mbox{Var}}}


\newcommand{\al}{\alpha}
\newcommand{\be}{\beta}
\newcommand{\ga}{\gamma}            \newcommand{\Ga}{\Gamma}
\newcommand{\de}{\delta}            \newcommand{\De}{\Delta}
\newcommand{\la}{\lambda}           \newcommand{\La}{\Lambda}
\newcommand{\Th}{\Theta}
\newcommand{\thx}{\theta_x}         \newcommand{\Thx}{\Theta_x}
\newcommand{\thy}{\theta_y}

\newcommand{\si}{\sigma}            \newcommand{\Si}{\Sigma}
\newcommand{\ka}{\kappa}
\newcommand{\om}{\omega}            \newcommand{\Om}{\Omega}

\newcommand{\ve}{\varepsilon}
\newcommand{\vp}{\varphi}
\newcommand{\vr}{\varrho}
\newcommand{\vth}{\vartheta}


\newcommand{\bxi}{{\bfm \xi}}      \newcommand{\bet}{{\bfm \eta}}
\newcommand{\bphi}{{\bfm \phi}}    \newcommand{\bep}{{\bfm \epsilon}}
\newcommand{\bmu}{{\bfm \mu}}      \newcommand{\bnu}{{\bfm \nu}}
\newcommand{\bla}{{\bfm \lambda}}  \newcommand{\bLa}{{\bfm \Lambda}}
\newcommand{\bSi}{{\bfm \Sigma}}
\newcommand{\bom}{{\bfm \omega}}   \newcommand{\bOm}{{\bfm \Omega}}
\newcommand{\bde}{{\bfm \delta}}   \newcommand{\bDe}{{\bfm \Delta}}
\newcommand{\bth}{\boldsymbol{\theta}}
\newcommand{\bsi}{{\bfm \sigma}}
\newcommand{\bbe}{{\bfm \beta}}
\newcommand{\bpsi}{{\bfm \psi}}
\newcommand{\bpi}{{\bfm \pi}}


\newcommand{\Bernoulli}{\mbox{Bernoulli}}
\newcommand{\Binomial}{\mbox{Binomial}}
\newcommand{\BBinomial}{\mbox{BBinomial}}
\newcommand{\BP}{\mbox{BP}}
\newcommand{\Categorical}{\mbox{Categorical}}
\newcommand{\D}{\mbox{D}}
\newcommand{\Degenerate}{\mbox{Degenerate}}
\newcommand{\DExponential}{\mbox{DExponential}}
\newcommand{\Dirichlet}{\mbox{Dirichlet}}
\newcommand{\DMultinomial}{\mbox{DMultinomial}}
\newcommand{\Exponential}{\mbox{Exponential}}
\newcommand{\Exp}{\mbox{Exp}}
\newcommand{\FDiscrete}{\mbox{FDiscrete}}
\newcommand{\GD}{\mbox{GD}}
\newcommand{\GDirichlet}{\mbox{GDirichlet}}
\newcommand{\GLiouville}{\mbox{GLiouville}}
\newcommand{\GP}{\mbox{GP}}
\newcommand{\GPD}{\mbox{GPD}}
\newcommand{\GPZIP}{\mbox{GP\_ZIP}}
\newcommand{\GPoisson}{\mbox{GPoisson}}
\newcommand{\Hgeometric}{\mbox{Hgeometric}}
\newcommand{\HPP}{\mbox{HPP}}
\newcommand{\IBeta}{\mbox{IBeta}}
\newcommand{\Ichi}{\mbox{I}\raisebox{0.5ex}{$\chi$}}
\newcommand{\IGamma}{\mbox{IGamma}}
\newcommand{\IGaussian}{\mbox{IGaussian}}
\newcommand{\IWishart}{\mbox{IWishart}}
\newcommand{\Laplace}{\mbox{Laplace}}
\newcommand{\Liouville}{\mbox{Liouville}}
\newcommand{\Logistic}{\mbox{Logistic}}
\newcommand{\Lognormal}{\mbox{Lognormal}}
\newcommand{\mBeta}{\mbox{Beta}}
\newcommand{\mGamma}{\mbox{Gamma}}
\newcommand{\Multinomial}{\mbox{Multinomial}}
\newcommand{\Normal}{\mbox{Normal}}
\newcommand{\NBinomial}{\mbox{NBinomial}}
\newcommand{\ND}{\mbox{ND}}
\newcommand{\NDirichlet}{\mbox{NDirichlet}}
\newcommand{\NHPP}{\mbox{NHPP}}
\newcommand{\NIW}{\mbox{NIW}}
\newcommand{\Poisson}{\mbox{Poisson}}
\newcommand{\TBeta}{\mbox{TBeta}}
\newcommand{\TN}{\mbox{TN}}
\newcommand{\Wishart}{\mbox{Wishart}}
\newcommand{\ZIGP}{\mbox{ZIGP}}
\newcommand{\ZIP}{\mbox{ZIP}}
\newcommand{\ZOIP}{\mbox{ZOIP}}
\newcommand{\ZINB}{\mbox{ZINB}}


\newcommand{\Corr}{\mbox{Corr}}
\newcommand{\Cov}{\mbox{Cov}}
\newcommand{\se}{\mbox{se}}
\newcommand{\Se}{\mbox{Se}}
\newcommand{\SE}{\mbox{SE}}
\newcommand{\tr}{\mbox{$\,$tr$\,$}}
\newcommand{\rank}{\mbox{rank}\,}
\newcommand{\Var}{\mbox{Var}}
\newcommand{\MSE}{\mbox{MSE}}
\newcommand{\CV}{\mbox{CV}}
\newcommand{\median}{\mbox{median}}

\newcommand{\logit}{\mbox{logit}}
\newcommand{\diag}{\mbox{diag}}
\newcommand{\data}{\mbox{data}}
\newcommand{\KL}{\mbox{KL}}
\newcommand{\IG}{\mbox{IG}}
\newcommand{\I}{\mbox{I}}

\newcommand{\qand}{\quad \mbox{and} \quad}
\newcommand{\qas}{\quad \mbox{as} \quad}
\newcommand{\qag}{\quad \mbox{against} \quad}
\newcommand{\qor}{\quad \mbox{or} \quad}
\newcommand{\qve}{\quad \mbox{versus} \quad}
\newcommand{\col}{\mbox{: }}
\newcommand{\RE}{\mbox{RE}}
\newcommand{\yes}{\mbox{yes}}
\newcommand{\No}{\mbox{no}}
\newcommand{\DPP}{\mbox{DPP}}

\newcommand{\e}{\mbox{e}}
\newcommand{\w}{\mbox{w}}
\renewcommand{\ge}{\geqslant}
\renewcommand{\le}{\leqslant}
\newcommand{\rd}{\,\mbox{d}}


\newcommand{\II}{{\rm I\!I}}
\newcommand{\III}{I$\!$I$\!$I}
\newcommand{\IR}{{I\!\! R}}
\newcommand{\IV}{I$\!$V}
\newcommand{\et}{{\it et al}.}
\newcommand{\Et}{{\it et al}.\,}
\newcommand{\st}{{\it s}.{\it t.}}

\newcommand{\yikH}{y_{ik}^{\rm H}}
\newcommand{\ybH}{\bar{y}^{\rm H}}


\newcommand{\sd}{\stackrel{d}{=}}
\newcommand{\sdt}{$ {\small $\sd$} $}
\newcommand{\heq}{\,\hat{=}\,}
\newcommand{\iid}{\stackrel{{\rm iid}}{\sim}}
\newcommand{\tiid}{i.i.d.$\hspace*{0.08cm}$}
\newcommand{\ind}{\stackrel{{\rm ind}}{\sim}}
\newcommand{\dsim}{\stackrel{.}{\sim}}
\newcommand{\dis}{\displaystyle}
\newcommand{\tex}{\textstyle}
\newcommand{\cf}{cf.$\hspace*{0.1cm}$}

\newcommand{\T}{\!\top\!}
\newcommand{\na}{\nabla}
\newcommand{\noi}{\noindent}
\newcommand{\ra}{\rightarrow}
\newcommand{\pr}{\propto}
\newcommand{\eq}{\equiv}
\newcommand{\pa}{\partial}
\newcommand{\ol}{\overline}
\newcommand{\non}{\nonumber}
\newcommand{\ap}{\approx}
\newcommand{\Bot}{\;\bot\;}
\newcommand{\inde}{{\Bot\!\!\!\!\!\!\!\Bot}}
\newcommand{\btu}{\bigtriangleup}


\newcommand{\vs}{\vspace*{-0.25cm}}
\newcommand{\vkl}{\vskip 0.10in}
\newcommand{\vkL}{\vskip 0.15in}
\newcommand{\vkU}{\vskip 0.30in}


\newcommand{\namelistlabel}[1]{\mbox{#1}\hfil}
\newenvironment{namelist}[1]{%
\begin{list}{}
       {\let \makelabel \namelistlabel
        \settowidth{\labelwidth}{#1}
        \setlength{\leftmargin}{1.1\labelwidth}   }
        }{%
\end{list} }

\def\bds{\begin{description} \itemsep=-\parsep \itemindent=-0.7 cm}
\def\eds{\end{description}}
\def\i{\item}

\newcommand{\hphi}{\hat{\phi}}
\newcommand{\tphi}{\tilde{\phi}}
\newcommand{\tla}{\tilde{\lambda}}
\newcommand{\bepsilon}{\bfm{\epsilon}}
\newcommand{\tnu}{\tilde{\nu}}

\begin{frontmatter}
\title{On singular values of large dimensional  lag-$\tau$ sample auto-correlation matrices}
\runtitle{LSD of auto-correlation matrix}

\begin{aug}
\author[A]{\fnms{Zhanting} \snm{Long}\ead[label=e1]{12032886@mail.sustech.edu.cn}},
\author[A]{\fnms{Zeng} \snm{Li}\ead[label=e2,mark]{liz9@sustech.edu.cn}}
\and
\author[B]{\fnms{Ruitao} \snm{Lin}\ead[label=e3]{rlin@mdanderson.org}}
\address[A]{Southern University of Science and Technology, \printead{e1,e2}}

\address[B]{The University of Texas MD Anderson Cancer Center, \printead{e3}}
\end{aug}

\begin{abstract}
We study the limiting behavior of singular values of a lag-$\tau$ sample auto-correlation matrix $\bR_{\tau}^{\bep}$ of error term $\bep$ in the high-dimensional factor model. We establish the limiting spectral distribution (LSD) which characterizes the global spectrum of $\bR_{\tau}^{\bep}$, and derive the limit of its largest singular value. All the asymptotic results are derived under the high-dimensional asymptotic regime where the data dimension and sample size go to infinity proportionally. Under mild assumptions, we show that 
the LSD of $\bR_{\tau}^{\bep}$ is the same as that of the lag-$\tau$ sample auto-covariance matrix. Based on this asymptotic equivalence, we additionally show that the largest singular value of $\bR_{\tau}^{\bep}$ converges almost surely to the right end point of the support of its LSD. Our results take the first step to identify the number of factors in factor analysis using lag-$\tau$ sample auto-correlation matrices. 
Our theoretical results are fully supported by numerical experiments as well. 
\end{abstract}


\begin{keyword}
\kwd{Auto-correlation matrix}
\kwd{Limiting spectral distribution}
\kwd{Random matrix theory}
\kwd{Largest eigenvalue}
\kwd{Auto-covariance matrix}
\end{keyword}

\end{frontmatter}

\section{Introduction}
Consider a sequence of $p$-dimensional stationary random vectors $\left\lbrace\by_{i}\right\rbrace$ that has a factor structure and can be represented as  
\begin{equation*}
\by_{i} = \bmu + {\bf B}{\bf f}_{i}+\bep_{i}, \quad i = 1,\ldots,n
\end{equation*}
where $\left\lbrace{\bf f}_{i}\right\rbrace$ is a sequence of $k$-dimensional latent factor vector, and $\left\lbrace\bep_{i}\right\rbrace$ is a sequence of unobservable stochastic error vector of independent and identically distributed (i.i.d.) components with zero mean and unit variance, independent with $\left\lbrace{\bf f}_{i}\right\rbrace$. Determining the number of factors $k$ is a core problem for the factor model, and it possesses many challenges in the high-dimensional setting. \citet{bai2002determining} first proposed a consistent estimator for static factor models. \citet{hallin2007determining} developed an information criterion for dynamic factor models. \citet{lam2012factor} studied the factor model for high-dimensional time series based on  lagged auto-covariance matrices. \citet{fan2020estimating} proposed an estimator based on sample correlation matrices to overcome the issue of the heterogeneous scales of the observed variables. In this paper, we study the lagged sample auto-correlation matrix for two reasons. On one hand, we believe that compared with the sample covariance matrix alone, the auto-correlation matrices of different lags may contain more information of $k$. Our ultimate goal is to investigate whether or not 
by borrowing information from the auto-correlation matrices of different lags, the final inference on the unknown number of factors would be more accurate or efficient. On the other hand, as with \citet{fan2020estimating}, the lag-$\tau$ auto-correlation matrix overcomes the disadvantage of the heterogeneity among different components by self normalization. 

Mathematically,  given the sequence of random vectors $\left\lbrace\by_{i}\right\rbrace$, we denote the population covariance matrix, the lag-$\tau$ (with $\tau$ being a fixed positive integer) auto-covariance, and auto-correlation matrices of $\left\lbrace\by_{i}\right\rbrace$ as $\bSi_{0}^{\by} = {\rm cov}(\by_{i})$, $\bSi_{\tau}^{\by} = {\rm cov}(\by_{i},\by_{i+\tau})$ and $\bOm_{\tau}^{\by} = {\rm corr}(\by_{i},\by_{i+\tau})$, respectively. Similarly, 
the population auto-covariance or auto-correlation matrices can be defined for sequences $\left\lbrace \bep_{i}\right\rbrace$ and $\left\lbrace{\bf f}_{i}\right\rbrace$ by way of analogy.
For example, $\bSi^{{\bf f}}_{\tau} = {\rm cov}({\bf f}_{i},{\bf f}_{i+\tau})$ is the lag-$\tau$ auto-covariance of $\left\lbrace{\bf f}_{i}\right\rbrace$.  Let the superscript ``$t$'' denote the transpose of a vector or matrix. It is known that the lag-$\tau$  auto-correlation matrix
\begin{equation*}  
\bOm^{\by}_{\tau} = [{\rm diag}(\bSi^{\by}_{0})]^{-1/2}\bSi^{\by}_{\tau}[{\rm diag}(\bSi^{\by}_{0})]^{-1/2} = [{\rm diag}(\bSi^{\by}_{0})]^{-1/2}(\bB\bSi^{{\bf f}}_{\tau}\bB^{t})[{\rm diag}(\bSi^{\by}_{0})]^{-1/2},    
\end{equation*}
 exactly has $k$ non-null singular values. As a result, based on the i.i.d observed data sample $\by_{1},\ldots,\by_{n}$, the number of factors $k$ can be  naturally estimated via the singular values of sample version of the lag-$\tau$ auto-correlation matrix 
\begin{equation*}
\bR_{\tau}^{\by} = [{\rm diag}(\bS_{0}^{\by})]^{-1/2}\bS_{\tau}^{\by}[{\rm diag}(\bS_{0}^{\by})]^{-1/2}.
\end{equation*}
Note that, the  lag-$\tau$ sample  auto-covariance matrix is given by 
\begin{eqnarray*}
\bS_{\tau}^{\by} &=& \frac{1}{n-1}\sum\limits_{i=1}^{n}(\by_{i}-\bar{\by})(\by_{i+\tau}-\bar{\by})^{t} \\
&=& \bB\left(\frac{1}{n-1}\sum\limits_{i = 1}^{n}({\bf f}_{i}-\bar{\bf f})({\bf f}_{i+\tau}-\bar{\bf f})^{t}\right)\bB^{t} + \bB\left(\frac{1}{n-1}\sum\limits_{i = 1}^{n}({\bf f}_{i}-\bar{\bf f})(\bep_{i+\tau}-\bar{\bep})^{t}\right) \\
&&+ \left(\frac{1}{n-1}\sum\limits_{i = 1}^{n}(\bep_{i}-\bar{\bep})({\bf f}_{i+\tau}-\bar{\bf f})^{t}\right)\bB^{t} + \frac{1}{n-1}\sum\limits_{i = 1}^{n}(\bep_{i}-\bar{\bep})(\bep_{i+\tau}-\bar{\bep})^{t}\\
&\triangleq & \bP^{\bB}_{\tau}+\bS_{\tau}^{\bep},
\end{eqnarray*}
where for a sequence $\left\lbrace \ba_{i}\right\rbrace=\left\lbrace \by_{i}\right\rbrace, \left\lbrace \bep_{i}\right\rbrace,$ or $\left\lbrace {\bf f}_{i}\right\rbrace$, $\bar{\ba} = \sum_{i=1}^{n}\ba_{i}/n$ and by convention $\ba_{i} = \ba_{n+i}$ for $i = 1,\ldots,\tau$.
Since $\bP^{\bB}_{\tau}$ is of rank $k$,  the lag-$\tau$ sample auto-covariance matrix of $\left\lbrace \by_{i}\right\rbrace$, $\bS_{\tau}^{\by}$,  can be treated as a finite rank perturbation of the lag-$\tau$ sample auto-covariance matrix of $\left\lbrace \bep_{i}\right\rbrace$, $\bS_{\tau}^{\bep}$, which is of rank $p\gg k$. Consequently, the lag-$\tau$  sample auto-correlation matrix of  $\left\lbrace \by_{i}\right\rbrace$, $\bR_{\tau}^{\by}$, is also a finite rank perturbation of the lag-$\tau$ sample auto-correlation matrix of $\left\lbrace \bep_{i}\right\rbrace$, $\bR_{\tau}^{\bep}$,  where
\begin{equation*}
\bR_{\tau}^{\bep} = [{\rm diag}(\bS_{0}^{\bep})]^{-1/2}\bS_{\tau}^{\bep}[{\rm diag}(\bS_{0}^{\bep})]^{-1/2},
\end{equation*}
Hence $\bR_{\tau}^{\by}$ follows the spike model pattern which is well studied in the random matrix theory (RMT), see, \citet{johnstone2001distribution}, \citet{baik2006eigenvalues}, \citet{bai2008central}, \citet{benaych2011eigenvalues}. 

In order to estimate $k$, a clear picture is needed for the asymptotic behavior of the singular values of $\bR_{\tau}^{\by}$, which are effected by the finite rank matrix and $\bR_{\tau}^{\bep}$. As a result, studying the sample auto-correlation matrix of $\left\lbrace \bep_{i}\right\rbrace$, $\bR_{\tau}^{\bep}$, takes the first step to identify the number of factors in factor analysis. In this paper, we study the limiting singular value distribution and the limit of the largest singular value of $\bR_{\tau}^{\bep}$ under the high-dimensional setting where the dimension $p$ and sample size $n$ are assumed to be of the same order. 



Because the eigenvalues of certain large random matrices play a critical role in many multivariate statistical analyses, 
limiting spectral properties of various matrix models has been widely studied using the RMT. In this paper, we use the tools of RMT to study the limiting spectral properties of the lag-$\tau$ sample auto-correlation matrix.
There is a rich literature on the LSD and extreme eigenvalues of large-dimensional matrices. As  a pioneering work, \citet{10.2307/1970079,10.2307/1970008} discovered the LSD of a large dimensional Wigner matrix and the limiting distribution is known as the semicircle law. 
\citet{marvcenko1967distribution} found that the empirical spectral distribution of sample covariance matrix converges to the Mar{\v{c}}enko-Pastur law under mild conditions. Considering the product of random matrices, \citet{YIN1983489}, and \citet{YIN198650} investigated the LSD of $\bS_{n}\bA$, where $\bS_{n}$ is sample covariance matrix and $\bA$ is a positive definite matrix. \citet{bai2007limit} exhibited the existence of LSD of $\bS_{n}\bH$ where $\bH$ is an arbitrary Hermitian matrix, and also investigated the LSD of $\bS_{n}\bW$ where $\bW$ is a Wigner matrix. \citet{yin1983limiting}, \citet{bai1986limiting} showed the existence of the LSD of multivariate $F$-matrix. \citet{bai1988limiting}, \citet{10.1214/aos/1176345134} and \citet{silverstein1985limiting} derived the explicit form of the LSD of multivariate $F$-matrix. The form of $\bH+\bX\bD\bX^{t}$,  where $\bH$ is a Hermitian matrix, $\bD$ is diagonal, and $\bX$ contains independent columns, has been studied by \citet{silverstein1995empirical}. \citet{bose2002limiting} derived the LSD of a circulant matrix. The limiting distributions of eigenvalues of sample correlation matrices was discovered by \citet{jiang2004limiting}. For a high-dimensional time series structure, \citet{li2015singular} investigated the limiting singular value distribution of sample auto-covariance matrices. Most results are derived via the tools of Stieltjes transform and moment method.

As for the limiting behaviour of extreme eigenvalues, the first known result was established by \citet{geman1980limit}, who showed that the largest eigenvalue of a sample covariance matrix convergences to a limit almost surely under a growth condition on all the moments. \citet{yin1988limit} improved this result under the existence of the fourth moment. For Wigner matrix, \citet{bai1988necessary} found the sufficient and necessary conditions for the almost sure convergence of the largest eigenvalue. \citet{jiang2004limiting} showed the largest eigenvalue of a sample correlation almost surely convergences to the right edge of support of its LSD.  \citet{vu2007spectral} derived the upper bound for the spectral norm of symmetric random matrices with independent entries.  \citet{wang2016moment} established the convergence of the largest singular value of a sample auto-covariance matrix based on graph theory. 

The results derived in this paper heavily rely on the pioneer work of  \citet{jiang2004limiting} and \citep{li2015singular}. In particular, 
 \citet{jiang2004limiting} showed that the LSD of the sample correlation matrix $\bR_{0}^{\bep}$ is the same as that of the sample covariance matrix $\bS_{0}^{\bep}$ and also established the convergence of the largest eigenvalue of $\bR_{0}^{\bep}$. Indeed, inspired by \citet{jiang2004limiting}, we try to relate the asymptotic results of singular values of $\bR_{\tau}^{\bep}$ to $\bS_{\tau}^{\bep}$ for fixed  $\tau \geq 1$. Since $\bR_{\tau}^{\bep}$ is not symmetric, we equivalently investigate the limiting behavior of eigenvalues of $\bR_{\tau}^{*} = \bR_{\tau}^{\bep}(\bR_{\tau}^{\bep})^{t}$. We show that the LSD of $\bR_{\tau}^{*}$ is the same as the LSD of $\bS_{\tau}^{*} = \bS_{\tau}^{\bep}(\bS_{\tau}^{\bep})^{t}$ \citep{li2015singular}, mimicking the case of  $\bR_{0}^{\bep}$ and $\bS_{0}^{\bep}$  as shown in \citet{jiang2004limiting}.  
 Additionally, we also prove that the largest eigenvalue of $\bR_{\tau}^{*}$ converges almost surely to the right edge of support of its LSD.

The rest of the paper is organized as follows. Section 2 introduces the main theoretical results in this paper. The detailed proofs of the theorems are given in section 3.

\section{Main results}
\label{sec:result}
\subsection{Preliminary}
Let $\mu$ be a finite measure on the real line, the Stieltjes transform of $\mu$ is defined by 
\begin{equation*}
m_{\mu}\left(z\right) = \int \frac{1}{x-z}\mu\left(dx\right), z\in \mathbb{C}\setminus {\Gamma}_{\mu},
\end{equation*}
where $\Gamma_{\mu}$ is the support of the finite measure $\mu$ on the real line $\mathbb{R}$. 

Let $\bA_{n}$ be a $p\times p$  Hermitian matrix with eigenvalues $\lambda_{1},\lambda_{2},\ldots,\lambda_{p}$, the empirical spectral distribution (ESD) of $\bA_{n}$ is  
\begin{equation*}
F^{\bA_{n}}\left(x\right) = \frac{1}{p}\sum\limits_{j = 1}^{p} I\left\lbrace\lambda_{j}\leq x\right\rbrace, x\in \mathbb{R}.
\end{equation*}
The LSD is the limiting distribution of $\left\lbrace F^{\bA_{n}}\right\rbrace_{n\geq 1}$ for a sequence of random matrices $\left\lbrace\bA_{n}\right\rbrace_{n\geq 1}$. By the definition of $F^{\bA_{n}}$, the Stieltjes transform of the ESD $F^{\bA_{n}}$ is 
\begin{equation*}
m_{\bA_{n}}\left(z\right) = \int \frac{1}{x-z}F^{\bA_{n}}\left(dx\right) = \frac{1}{p}{\rm tr}\left(\bA_{n}-z \bI_{p}\right)^{-1},
\end{equation*}
where ${\rm tr}(\cdot)$ denotes the trace function and $\bI_p$ is the $p$-dimensional identity matrix. 
With $m_{\bA_{n}}\left(z\right)$, the density function of the LSD of $\bA_{n}$ can be obtained by inversion formula,
\begin{equation*}
f\left(u\right) = \lim\limits_{\epsilon \rightarrow 0_{+}} {\mathfrak{Im}} m_{\bA_{n}}\left(u+i\epsilon\right),
\end{equation*}
where $z$ is substituted by $u+i\epsilon$.

\subsection{Limiting spectral distribution}
Recall that $\by_{i} = \bmu+\bB{\bf f}_{i} +\bep_{i}$, $i = 1,\ldots,n$, we first focus on the limiting singular value distribution of the lag-$\tau$ auto-correlation matrix ${\bf R}_{\tau}^{\bep}$. Equivalently, we consider the LSD of the symmetric matrix $\bR_{\tau}^{*} = \bR_{\tau}^{\bep}(\bR_{\tau}^{\bep})^{t}$.

\begin{itemize} 
	\item {\bf Assumption (A)}. $\bep_{i} = \left(\epsilon_{i,1},\ldots,\epsilon_{i,p}\right)^{t}$, $i = 1,2,\ldots,n$ are independent $p$-dimensional random vectors with entries satisfying
	\begin{eqnarray*}
		E\left(\epsilon_{i,j}\right) = 0, \quad E\left(\epsilon_{i,j}^{2}\right) = 1,\quad \sup\limits_{1\leq i\leq n, 1\leq j\leq p} E\left(\left\vert \epsilon_{i,j}^{4+\delta}\right\vert\right) < M,
	\end{eqnarray*}
	for constant $M$ and positive $\delta$.
	
	\item {\bf Assumption (B)}. As $p \rightarrow \infty$, $n \rightarrow \infty$ and $p/n \rightarrow y \in \left(0,\infty\right)$.
\end{itemize}

\begin{theorem}\label{Theorem1}

Under Assumptions (A) and (B), as $p,n \rightarrow \infty$, for fixed $\tau \geq 1$, almost surely the empirical distribution of $F^{\bR^{*}_{\tau}}$ converges to a deterministic probability function $F$ whose Stieltjes transform  $m = m\left(z\right)$, $z \in \mathbb{C}\setminus \mathbb{R}$, satisfies the following equation
\begin{equation*}
z^{2}y^{2}m^{3}+zy\left(y-1\right)m^{2}-zm-1=0.
\end{equation*} 
The density function of $F$, $f\left(u\right)$, is given by
\begin{align}\label{den}
f\left(u\right) =& \frac{1}{y\pi u}\Bigg\{ -u-\frac{5\left(y-1\right)^{2}}{3}+\frac{2^{4/3}\left(3u+\left(y-1\right)^{2}\right)\left(y-1\right)}{3d\left(u\right)^{1/3}}+\frac{2^{2/3}\left(y-1\right)d\left(u\right)^{1/3}}{3} \notag \\
&+\frac{1}{48}\left[-8\left(y-1\right)+ \frac{2^{4/3}\left(3u+\left(y-1\right)^{2}\right)}{d\left(u\right)^{1/3}}+2^{2/3}d\left(u\right)^{1/3}\right]^{2} \Bigg\}^{1/2},
\end{align}
where 
\begin{equation*}
d(u) = -2\left(y-1\right)^{3}+9\left(1+2y\right)u+3\sqrt{3}\sqrt{u\left(-4u^{2}+\left(-1+4y\left(5+2y\right)\right)u-4y\left(y-1\right)^{3}\right)}.
\end{equation*}
Here, the support of $f\left(u\right)$ is $\left(0,b\right]$ for $0<y<1$, and $\left[a,b\right]$ for $y\geq 1$, where 
\begin{eqnarray}
a &=&  \frac{1}{8}\left(-1+20y+8y^{2}-\left(1+8y\right)^{3/2}\right),\label{a} \\
b &=&  \frac{1}{8}\left(-1+20y+8y^{2}+\left(1+8y\right)^{3/2}\right). \label{b}
\end{eqnarray}
For the latter case with $y\geq 1$, the density function $f\left(u\right)$ has an additional point mass $\left(1-\frac{1}{y}\right)$ at the origin.
\end{theorem}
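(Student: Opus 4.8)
The plan is to reduce the study of $F^{\bR_\tau^*}$ to that of the (already understood) symmetrized lag-$\tau$ sample auto-covariance matrix $\bS_\tau^*:=\bS_\tau^\bep(\bS_\tau^\bep)^t$, mirroring \citet{jiang2004limiting}'s passage from $\bR_0^\bep$ to $\bS_0^\bep$, and then to extract the density and support by solving the resulting cubic for the Stieltjes transform $m(z)$ via Cardano's formula.

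\emph{Reduction to $\bS_\tau^*$.} By a standard truncation one first replaces each $\epsilon_{ij}$ by its truncation at level $\delta_n\sqrt n$, $\delta_n\downarrow0$ slowly, recentered and renormalized; the $(4+\delta)$-th moment hypothesis guarantees this alters neither $F^{\bS_\tau^*}$ nor $F^{\bR_\tau^*}$ in the limit, and after truncation $\|\bS_\tau^\bep\|$ is bounded almost surely (Yin--Bai--Krishnaiah type bounds for matrix products, as in \citet{li2015singular}; cf.\ also \citet{wang2016moment}). Using the circular convention $\bep_{n+i}=\bep_i$ one checks $\bS_\tau^\bep=\frac1{n-1}\sum_{i=1}^n\bep_i\bep_{i+\tau}^t-\frac{n}{n-1}\bar\bep\,\bar\bep^t$, so dropping the centering is a rank-one perturbation and is negligible for the LSD; work henceforth with $\bS_\tau:=\frac1{n-1}\sum_i\bep_i\bep_{i+\tau}^t$ and $\bD:=\diag(\bS_0^\bep)$, so that $\bR_\tau^\bep=\bD^{-1/2}\bS_\tau\bD^{-1/2}$. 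The crucial estimate is $\eta_n:=\max_{1\le j\le p}\bigl|(\bS_0^\bep)_{jj}-1\bigr|\to0$ almost surely: each $(\bS_0^\bep)_{jj}$ is a normalized sum of the $\epsilon_{ij}^2$, whose summands have $2+\delta/2$ finite moments, so a Markov/Rosenthal bound, a union over the $p\asymp n$ coordinates, and Borel--Cantelli give it. Then $\|\bD^{\pm1/2}-\bI_p\|\to0$ a.s., and expanding the factors of $\bR_\tau^*=\bD^{-1/2}\bS_\tau\bD^{-1}\bS_\tau^t\bD^{-1/2}$ yields $\|\bR_\tau^*-\bS_\tau^*\|\le C\eta_n\|\bS_\tau\|^2\to0$ a.s.; by Weyl's perturbation inequality $F^{\bR_\tau^*}$ and $F^{\bS_\tau^*}$ then have the same limit.

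\emph{LSD of $\bS_\tau^*$ and the atom.} By \citet{li2015singular}, $F^{\bS_\tau^*}$ converges a.s.\ to a deterministic distribution whose Stieltjes transform $m(z)$ --- the solution with $\Im m>0$ for $\Im z>0$ and $m(z)\to0$ as $|z|\to\infty$ --- satisfies $z^2y^2m^3+zy(y-1)m^2-zm-1=0$; moreover $\bS_\tau^\bep=\frac1{n-1}\bE_{(1)}\bE_{(2)}^t$ with $\bE_{(1)},\bE_{(2)}$ the $p\times n$ matrices formed from the $\bep_i$ and the $\bep_{i+\tau}$, so $\rank\bS_\tau^\bep\le n$, and for $p>n$ at least $p-n$ eigenvalues of $\bS_\tau^*$ vanish, giving $F^{\bS_\tau^*}$ an atom $\to1-1/y$ at the origin. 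With the reduction above, this proves the cubic equation and the point-mass statement of the theorem.

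\emph{Density, support, and the main obstacle.} For real $z=u>0$ we may divide the cubic by $z^2y^2$, depress it by a real shift, and apply Cardano's formula; where the discriminant is positive there is one real and two conjugate complex roots, and the Stieltjes transform is a complex root, so $f(u)=\tfrac1\pi\Im m(u)=\tfrac{\sqrt3}{2\pi}\,(A-B)$ with $A,B$ the two real cube roots in the formula. Writing $A,B$ in terms of $d(u)$ --- which is, up to a power of $2$ and explicit factors of $u,y$, the quantity under the outer square root in Cardano's expression --- and expanding $\tfrac3{4\pi^2}(A-B)^2$ yields \eqref{den}. The absolutely continuous part lives exactly where the discriminant of $z^2y^2m^3+zy(y-1)m^2-zm-1$ in $m$ is negative; setting this discriminant to zero, the boundary locus factors as $u$ times a quadratic in $u$ whose two roots are $a$ and $b$ of \eqref{a}--\eqref{b} (consistent with $a+b=(-1+20y+8y^2)/4$, $ab=y(y-1)^3$, and $(1+8y)^{3/2}$ the square root of that quadratic's discriminant). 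When $0<y<1$ one has $ab<0$, so the active edges are $0$ and $b$ and the support is $(0,b]$; when $y\ge1$ both roots are nonnegative, the support is $[a,b]$, and the $p-n$ null eigenvalues found above supply the atom $1-1/y$ at $0$. I expect the real difficulty to lie in the reduction: proving $\eta_n\to0$ a.s.\ --- hence the LSD-invariance of the self-normalization --- under only a $(4+\delta)$-th moment, together with pushing the truncation through the non-symmetric, lag-dependent product $\bS_\tau^\bep$ while keeping $\|\bS_\tau^\bep\|$ a.s.\ bounded; the density and support computations, though lengthy, are entirely routine.
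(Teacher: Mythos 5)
Your overall strategy coincides with the paper's: establish that $F^{\bR_\tau^*}$ and $F^{\bS_\tau^*}$ share the same limit, then import the cubic equation, density and support from \citet{li2015singular} (your discriminant check $ab=y(y-1)^3$, $a+b=\tfrac14(-1+20y+8y^2)$ is consistent with \eqref{a}--\eqref{b}, and the rank argument for the atom at $0$ is the right one). Where you genuinely diverge is in the mechanism of the comparison. You use an operator-norm perturbation: write $\bR_\tau^*=\bD^{-1/2}\bS_\tau\bD^{-1}\bS_\tau^t\bD^{-1/2}$, show $\Vert\bD^{\pm1/2}-\bI\Vert\to0$ a.s., bound $\Vert\bR_\tau^*-\bS_\tau^*\Vert\le C\eta_n\Vert\bS_\tau\Vert^2$, and conclude via Weyl. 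This forces you to control $\Vert\bS_\tau^\bep\Vert$ almost surely, hence the truncation step and an appeal to the (comparatively deep) largest-singular-value result of \citet{wang2016moment}. The paper instead bounds the \emph{L\'evy distance} directly via the trace difference inequality
$L^4(F^{\tilde\bR_\tau^*},F^{\tilde\bS_\tau^*})\le \tfrac2p\tr\bigl((\tilde\bR_\tau^\bep-\tilde\bS_\tau^\bep)(\tilde\bR_\tau^\bep-\tilde\bS_\tau^\bep)^t\bigr)\cdot\tfrac1p\tr(\tilde\bR_\tau^*+\tilde\bS_\tau^*)$,
which reduces everything to law-of-large-numbers computations on traces; no truncation and no spectral-norm bound are needed, and the Wang--Yao result is reserved for Theorem \ref{Theorem2}, where it is genuinely indispensable. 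Your route is correct but buys the LSD at the price of the harder edge result; the paper's route is more elementary and self-contained for this theorem. (The paper handles the centering via its ``substitution principle'' lemmas rather than your rank-one remark, but for first-order results these are equivalent.)

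One step of yours is stated too casually: the claim that $\eta_n=\max_{1\le j\le p}\bigl|(\bS_0^\bep)_{jj}-1\bigr|\to0$ a.s.\ follows from ``a Markov/Rosenthal bound, a union over the $p\asymp n$ coordinates, and Borel--Cantelli.'' With only $2+\delta/2$ moments for $\epsilon_{ij}^2$, Rosenthal gives a tail of order $n^{-1-\delta/4}$ per coordinate, so the union bound yields $O(n^{-\delta/4})$, which is not summable for $\delta\le4$; a direct Borel--Cantelli application fails. This is exactly the content of Lemma 2 of \citet{bai1993limit} (which the paper cites), whose proof requires truncation and higher-moment estimates, or at least a dyadic-subsequence argument. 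The conclusion is true and citable, so this is a presentational gap rather than a fatal one, but as written the justification does not close.
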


\autoref{fig:my_label2} contrasts the ESD of $\bR_{\tau}^{*}$ (histogram) with $\tau = 1$ and the theoretical limiting density function $f(u)$ (solid line) based on i.i.d. samples from the standard normal distribution with $y = 0.5,1,2,2.5$ and $n = 500$. It can be seen that the empirical histogram of eigenvalues of $\bR_{\tau}^{*}$ is consistent with the limiting  density function \eqref{den} for all $(p,n)$ combinations. 

\begin{figure} 
    \centering
    \includegraphics[width = 14.5cm]{./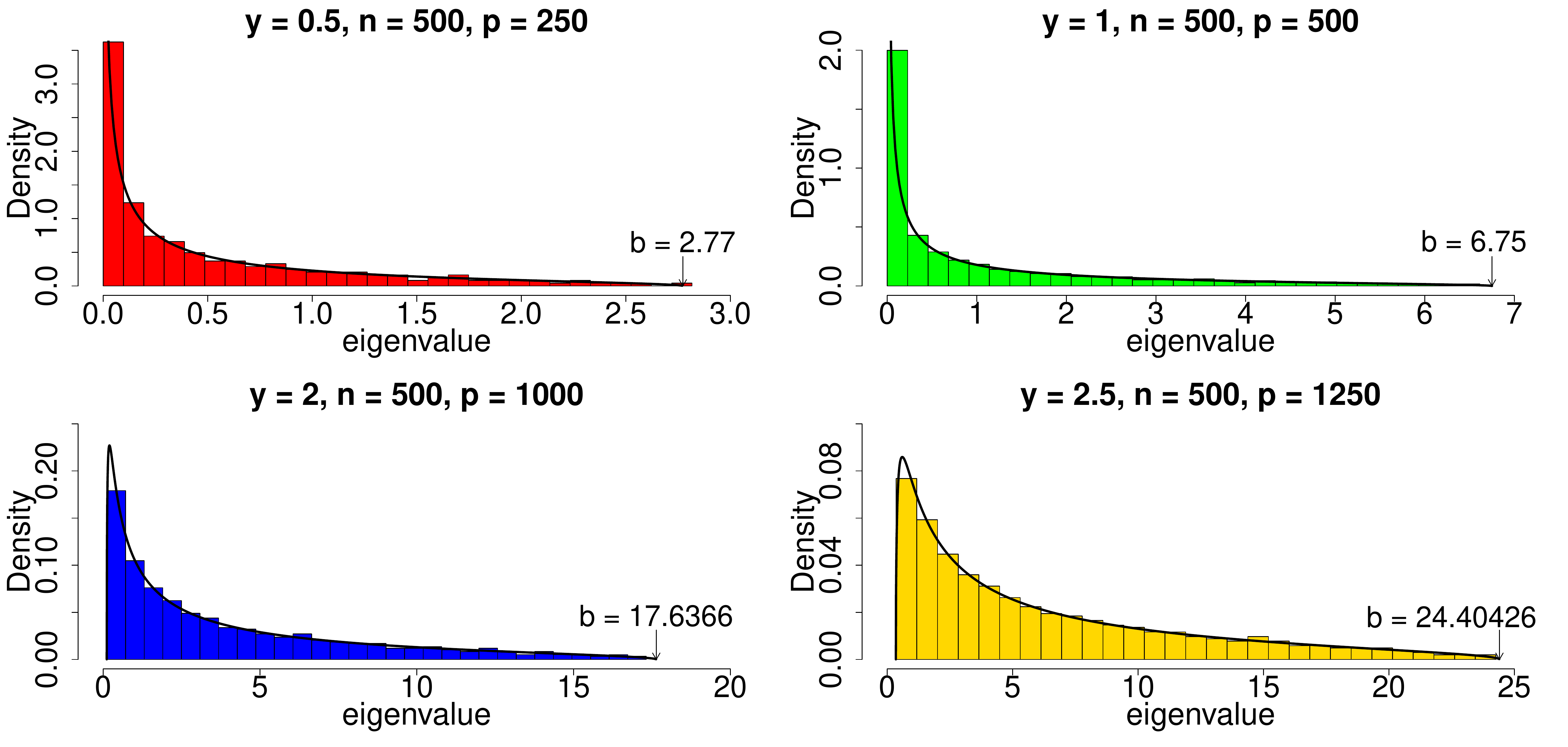}
    \caption{The histogram of the sample eigenvalues of $\bR_{\tau}^{*}$ with $\tau = 1$ and the theoretical limiting spectral density function $f(u)$. In all panels, the sample size $n$ is fixed at $n=500$, and the ratio of the dimensionality to the sample size  is set as $y =0.5,1,1.5,2$ from top to bottom and left to right, respectively.}
    \label{fig:my_label2}
\end{figure}

\begin{remark}
	By comparing \autoref{Theorem1} with Theorem 2.1 in \cite{li2015singular}, we can see that the LSDs of $\bR^{*}_{1}$ and $\bS^{*}_{1}$ are the same, which is consistent with the results on  sample correlation and covariance matrices \citep{jiang2004limiting}. In addition, as shown by \cite{li2015singular} the singular value distribution of $\bS_{\tau}^{\bep}$ is the same as that of $\bS_{1}^{\bep}$ for any fixed $\tau>1$. Such results also hold for the singular value distribution of $\bR_{\tau}^{\bep}$.
\end{remark}

\subsection{Limiting behaviour of the largest eigenvalue}
Next, we study the limiting behaviour of the largest eigenvalue of $\bR_{\tau}^{*}$. The following theorem shows that the largest eigenvalue converges to the right edge of the support of LSD of $\bR_{\tau}^{*}$, mimicking the limiting behavior of the largest eigenvalue of $\bS_{\tau}^{*}$.
\begin{theorem}\label{Theorem2}
Suppose that Assumptions (A) and (B) hold. Let $\lambda_{\rm max}({\bR_{\tau}^{*}})$ be the largest eigenvalue of $\bR_{\tau}^{*}$,  then for fixed $\tau \geq 1$ almost surely,
\begin{equation*}
\lambda_{\rm max}({\bR_{\tau}^{*}}) \rightarrow b,\quad \mbox{ as } p,n\rightarrow \infty,
\end{equation*}
where $b = \frac{1}{8}\left(-1+20y+8y^{2}+\left(1+8y\right)^{3/2}\right)$ is the right edge of the support of the LSD of  $\bR_{\tau}^{*}$.
\end{theorem}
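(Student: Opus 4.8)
The plan is to leverage the asymptotic equivalence between $\bR_\tau^{\bep}$ and $\bS_\tau^{\bep}$ established in the proof of Theorem~\ref{Theorem1}, together with the known convergence of the largest eigenvalue of $\bS_\tau^{*} = \bS_\tau^{\bep}(\bS_\tau^{\bep})^t$ to the right edge $b$ of its support (this follows from the auto-covariance results of \citet{li2015singular} and \citet{wang2016moment}, whose LSD agrees with that of $\bR_\tau^{*}$ by Theorem~\ref{Theorem1}). Write $\bD = \mathrm{diag}(\bS_0^{\bep})$, so $\bR_\tau^{\bep} = \bD^{-1/2}\bS_\tau^{\bep}\bD^{-1/2}$ and hence $\bR_\tau^{*} = \bD^{-1/2}\bS_\tau^{\bep}\bD^{-1}(\bS_\tau^{\bep})^t\bD^{-1/2}$. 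The key elementary fact is that the diagonal entries of $\bD$ are, up to centering corrections, the sample second moments $\frac{1}{n-1}\sum_i \epsilon_{i,j}^2$, which by the strong law of large numbers converge to $1$ uniformly in $j$ under Assumption (A) (the $(4+\delta)$-th moment bound gives enough control for a uniform bound via a maximal inequality / Borel–Cantelli argument, exactly as in \citet{jiang2004limiting}). Thus $\|\bD - \bI_p\| \to 0$ and $\|\bD^{-1} - \bI_p\| \to 0$ almost surely.

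The main steps I would carry out are as follows. \emph{Step 1:} Show $\max_j |D_{jj} - 1| \to 0$ a.s.; this is the technical heart and mirrors Lemma-type statements in \citet{jiang2004limiting}. \emph{Step 2:} Use a perturbation/sandwich argument for eigenvalues. Since $\bR_\tau^{*}$ and $\bS_\tau^{*}$ are related by conjugation with the near-identity matrix $\bD^{-1/2}$, write $\bR_\tau^{*} = \bD^{-1/2}\bS_\tau^{\bep}\bD^{-1}(\bS_\tau^{\bep})^t\bD^{-1/2}$ and bound
\[
\lambda_{\max}(\bR_\tau^{*}) \le \|\bD^{-1/2}\|^2\,\|\bD^{-1}\|\,\lambda_{\max}\big(\bS_\tau^{\bep}(\bS_\tau^{\bep})^t\big) = \|\bD^{-1}\|^2\,\lambda_{\max}(\bS_\tau^{*}),
\]
and symmetrically $\lambda_{\max}(\bR_\tau^{*}) \ge \|\bD\|^{-2}\,\lambda_{\max}(\bS_\tau^{*})$, using that for positive semidefinite $\bA$ and invertible $\bC$ one has $\lambda_{\max}(\bC\bA\bC^t)\le\|\bC\|^2\lambda_{\max}(\bA)$. \emph{Step 3:} Since $\|\bD^{\pm 1}\| \to 1$ a.s.\ by Step~1 and $\lambda_{\max}(\bS_\tau^{*}) \to b$ a.s., the sandwich forces $\lambda_{\max}(\bR_\tau^{*}) \to b$ a.s. \emph{Step 4:} Identify $b$ with the right endpoint of the support of the LSD $F$ from Theorem~\ref{Theorem1}, which is already recorded in \eqref{b}; since the LSDs of $\bR_\tau^{*}$ and $\bS_\tau^{*}$ coincide, their right endpoints coincide, so the limit $b$ is indeed the right edge of the support of the LSD of $\bR_\tau^{*}$.

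The step I expect to be the main obstacle is \emph{Step 1}: establishing the \emph{uniform} (over $j = 1,\dots,p$ with $p$ growing proportionally to $n$) almost-sure convergence of the diagonal normalizers to $1$. A pointwise SLLN is not enough; one needs a quantitative tail bound — e.g.\ a Rosenthal-type moment inequality for $\sum_i(\epsilon_{i,j}^2 - 1)$ combined with a union bound over the $p$ coordinates and Borel–Cantelli along the sequence $n$ — and this is precisely where the $(4+\delta)$-th moment assumption in (A) is consumed. One also has to handle the sample-mean centering terms $\bar\bep$ and the cyclic convention $\ba_{n+i}=\ba_i$, but these contribute only lower-order corrections that vanish uniformly by the same moment control. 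Once the uniform convergence of $\bD$ is in hand, the remainder of the argument is soft linear algebra and requires no delicate random matrix analysis beyond what is already imported from Theorem~\ref{Theorem1} and \citet{li2015singular}.
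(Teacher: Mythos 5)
Your proposal is correct and rests on the same two pillars as the paper's proof: the almost sure convergence $\lambda_{\max}(\bS_\tau^{*})\to b$ imported from \citet{wang2016moment} (after a substitution-principle reduction to the non-centered matrices, which absorbs the $\bar\bep$-centering you flag), and the uniform almost sure convergence $\max_j\vert \Vert\bep_j^0\Vert^2/n-1\vert\to 0$, which the paper gets directly from Lemma 2 of \citet{bai1993limit} rather than a Rosenthal-plus-union-bound argument. The only cosmetic difference is the final linear-algebra step: you squeeze $\lambda_{\max}(\bR_\tau^{*})$ multiplicatively between $\Vert\bD\Vert^{\pm 2}\lambda_{\max}(\bS_\tau^{*})$, whereas the paper bounds $\vert\sqrt{\lambda_{\max}(\tilde\bR_\tau^{*})}-\sqrt{\lambda_{\max}(\tilde\bS_\tau^{*})}\vert$ additively by $\Vert\tilde\bR_\tau^{\bep}-\tilde\bS_\tau^{\bep}\Vert_2$ via the singular-value perturbation inequality and a triangle-inequality expansion in $\bD-\bI$; both are sound.
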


\begin{remark}
The limit of the largest eigenvalue of $\bR^{*}_{\tau}$ is equal to that of $\bS^{*}_{\tau}$.
\end{remark}

\begin{figure}[htbp]
    \centering
    \includegraphics[width = 14.5cm]{./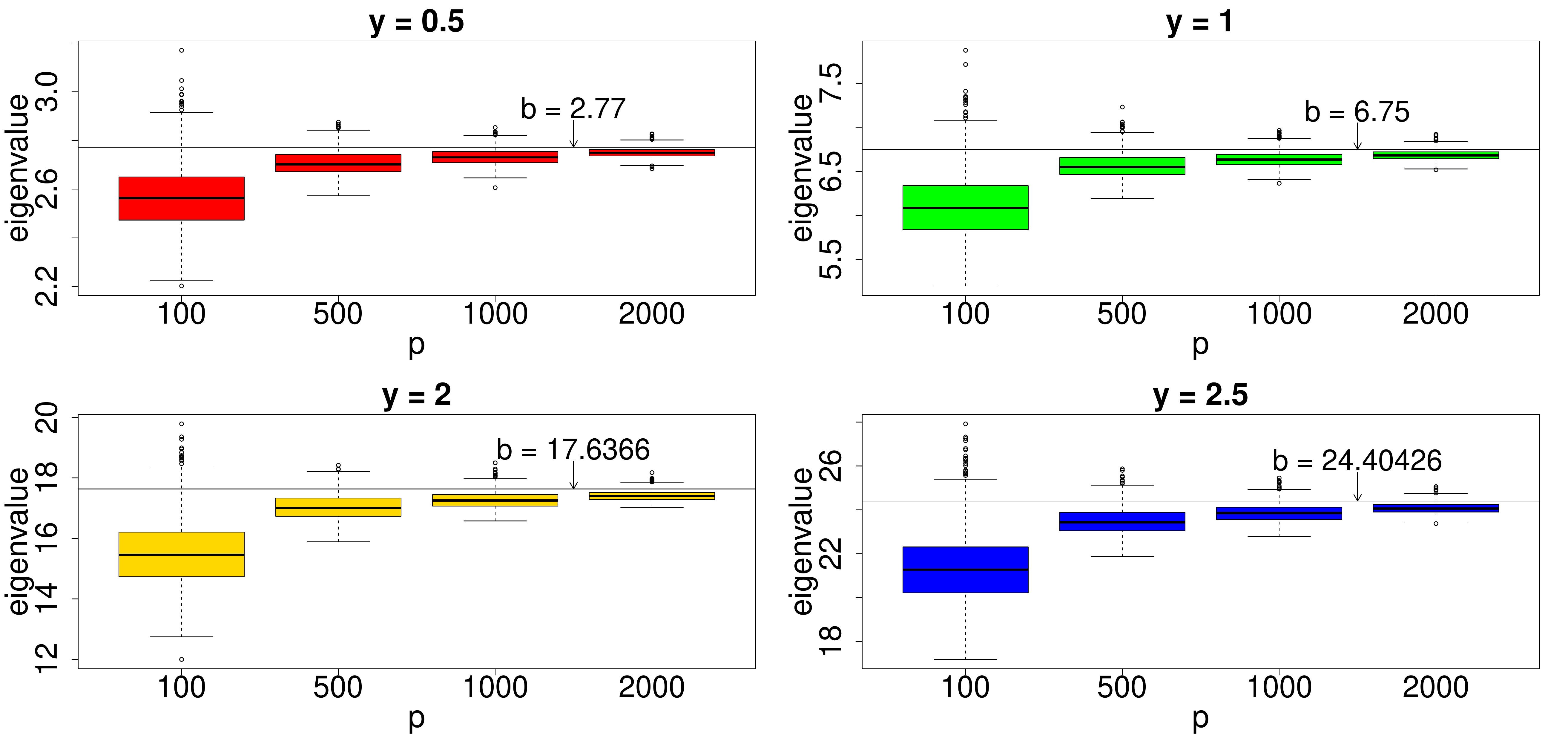}
    \caption{ Boxplot of the largest eigenvalues of 1000 lag-$1$ sample auto-correlation matrices of samples from the standard normal population. In all panels, the horizontal line indicates the right end point of its LSD, and the ratio of the dimensionality to the sample size  is set as $y =0.5,1,2,2.5$ from top to bottom and left to right, respectively.}
    \label{fig:my_label33}
\end{figure}
\autoref{fig:my_label33} displays the boxplot of the largest eigenvalues of $\bR_{\tau}^{*}$ with $\tau=1$ based on 1,000 replications of independent and identically distributed samples from the standard normal distribution. We consider four values for the dimension, i.e., $p = 100,500,1000,2000$, and vary the value of $y$, i.e., the ratio of the dimensionality to the sample size, from 0.5 to 2.5 in the four panels. In each panel, the horizontal line corresponds to the  theoretical right end point $b$ of                                                                                                                                                                                                                                                                                                                                                                                                                                                                                                                                                                                                                                                                                                                                                                                                                                                                                                                                                                                                                                                                                                                                                                                                                                                                                                                                                                                                                                                                                                                                                                                                                                                                                                 LSD. From \autoref{fig:my_label33}, we can see that the largest eigenvalue of $\bR_{\tau}^{*}$ converges to the  right end point $b$ as both the dimension $p$ and the sample size $n$ increase proportionally.

\subsection{Comparison with sample correlation matrix}
In the previous sections, we study the lag-$\tau$ sample auto-correlation matrix for fixed  $\tau\geq 1$. These asymptotic results can not be directly extended to the case of $\bR_{0}^{*}$. Because the LSD of $\bR_{0}^{*}$ is no longer the same as in \autoref{Theorem1}. Unlike $\bR_{\tau}^{\bep}$ for fixed $\tau\geq 1$, $\bR_{0}^{\bep}$ is a symmetric matrix. The limiting behavior can be directly derived based on  the sample correlation matri $\bR_{0}^{\bep}$  and there is no need to consider the eigenvalues of the transformation $\bR_{0}^{*} =\bR_{0}^{\bep}(\bR_{0}^{\bep})^t$. Although \citet{jiang2004limiting} has already showed that the ESD of   $\bR_{0}^{\bep}$ converges to the well-known Mar{\v{c}}enko-Pastur law, for completeness, we copy the results of $\bR_{0}^{\bep}$  below. 
\begin{proposition}{(\cite{jiang2004limiting})}
Suppose $\bep_{i} = \left(\epsilon_{1i},\ldots,\epsilon_{pi}\right)^{t}$, $i = 1,2,\ldots,n$ are independent $p$-dimensional random vectors with entries satisfying $E(\epsilon_{ij})=0$,  $E\left(\vert\epsilon_{ji}\vert^{2}\right)< \infty$. Let $p/n\rightarrow y\in (0,\infty)$, then, almost surely, $F^{\bR_{0}^{\bep}}$ converges to a deterministic probability distribution with density function
\begin{equation*}
f_{y}\left(u\right)= 
\begin{cases}
\frac{1}{2\pi uy}\sqrt{\left(b-u\right)\left(u-a\right)}, & \mbox{if } a\leq u \leq b, \\
0, & \mbox{otherwise,} 
\end{cases}
\end{equation*}
and a point mass with value $1-1/y$ at $x = 0$ if $y>1$,where $a =\left(1-\sqrt{y}\right)^{2}$ and $b =\left(1+\sqrt{y}\right)^{2}$.
\end{proposition}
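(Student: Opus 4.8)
The plan is to reduce the sample correlation matrix to the ordinary sample covariance matrix by controlling the diagonal normalization, and then invoke the classical Mar{\v c}enko--Pastur theorem \citep{marvcenko1967distribution}. Write $\hat{\bD}=[\diag(\bS_0^\bep)]^{1/2}=\diag(\hat\sigma_1,\ldots,\hat\sigma_p)$, where $\hat\sigma_j^2=(\bS_0^\bep)_{jj}$ is the $j$-th sample variance, so that by definition $\bR_0^\bep=\hat{\bD}^{-1}\bS_0^\bep\hat{\bD}^{-1}$. Since $E(\epsilon_{ji}^2)=1$, the target population normalization is the identity, and hence the entire argument rests on showing that $\hat{\bD}$ is uniformly close to $\bI_p$. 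Once $\max_{1\le j\le p}|\hat\sigma_j^2-1|\to 0$ almost surely is established, a perturbation inequality for empirical spectral distributions will show that $F^{\bR_0^\bep}$ and $F^{\bS_0^\bep}$ share the same limit, and the Mar{\v c}enko--Pastur theorem will identify that limit as $f_y$.

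First I would carry out a truncation step, which is forced on us by the fact that only a second moment is assumed. Replacing $\epsilon_{ji}$ by $\tilde\epsilon_{ji}=\epsilon_{ji}\,I\{|\epsilon_{ji}|\le\eta_n\sqrt n\}$ for a sequence $\eta_n\downarrow 0$ chosen to decay slowly, followed by recentering and rescaling to restore mean zero and unit variance, I would use the rank and norm perturbation inequalities for ESDs---in particular $L^3(F^{\bA},F^{\bB})\le p^{-1}\tr[(\bA-\bB)(\bA-\bB)^t]$ together with the rank inequality $\|F^{\bA}-F^{\bB}\|_\infty\le p^{-1}\rank(\bA-\bB)$---to show that this truncation alters neither $F^{\bR_0^\bep}$ nor $F^{\bS_0^\bep}$ in the limit. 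The only input required here is $E(\epsilon_{ji}^2)<\infty$, which makes the discarded tail contribution negligible; the same rank inequality also disposes of the centering correction $\bar\bep\,\bar\bep^t$. After this reduction every entry is bounded by $\eta_n\sqrt n$, so all the moment quantities appearing below are finite.

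The main obstacle is the key lemma $\max_{1\le j\le p}|\hat\sigma_j^2-1|\to 0$ almost surely. For a single coordinate this is just the strong law of large numbers, but taking the maximum over $p\asymp n$ coordinates under a bare second-moment assumption is delicate, and this is precisely where the truncation pays off. For the truncated variables I would apply a Bernstein-type large-deviation bound to $\hat\sigma_j^2-E\hat\sigma_j^2$, combine it with a union bound over the $p$ coordinates and the Borel--Cantelli lemma, tuning the truncation level $\eta_n\sqrt n$ so that the exponential tail dominates the factor $p$. A separate elementary estimate controls $|E\hat\sigma_j^2-1|\to 0$ and the contribution of the sample mean.

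Finally, writing $\bE=\hat{\bD}^{-1}-\bI_p$, so that $\|\bE\|=\max_j|\hat\sigma_j^{-1}-1|\to 0$ by the key lemma, I would expand
\begin{equation*}
\bR_0^\bep-\bS_0^\bep=\bE\bS_0^\bep+\bS_0^\bep\bE+\bE\bS_0^\bep\bE,
\end{equation*}
and bound $p^{-1}\tr[(\bR_0^\bep-\bS_0^\bep)^2]\le C\|\bE\|^2\,p^{-1}\tr[(\bS_0^\bep)^2]$. For the truncated matrix $p^{-1}\tr[(\bS_0^\bep)^2]$ is almost surely bounded, since it converges to the second moment $1+y$ of the Mar{\v c}enko--Pastur law, so the right-hand side tends to $0$. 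The ESD perturbation inequality then gives $L(F^{\bR_0^\bep},F^{\bS_0^\bep})\to 0$ almost surely, and the Mar{\v c}enko--Pastur theorem for $\bS_0^\bep$ yields the stated density $f_y$ on $[a,b]$; the point mass $1-1/y$ at the origin for $y>1$ is inherited from the rank deficiency of $\bS_0^\bep$, which is preserved under the invertible scaling $\hat{\bD}^{-1}$.
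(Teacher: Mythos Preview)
The paper does not supply its own proof of this proposition: it is quoted verbatim from \cite{jiang2004limiting} ``for completeness'' and no argument is given in the text. So there is no in-paper proof to compare against.

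Your sketch is the standard route and is essentially Jiang's original argument: truncate to manufacture enough moments, prove the key uniform law $\max_{1\le j\le p}|\hat\sigma_j^2-1|\to 0$ a.s., and then use an ESD perturbation inequality together with the expansion $\bR_0^\bep-\bS_0^\bep=\bE\bS_0^\bep+\bS_0^\bep\bE+\bE\bS_0^\bep\bE$ to transfer the Mar{\v c}enko--Pastur limit from $\bS_0^\bep$ to $\bR_0^\bep$. This is also exactly the template the present paper follows for its own result in the $\tau\ge 1$ case (their Lemma~\ref{lemma1}): they write $\tilde\bR_\tau^\bep=\bD\tilde\bS_\tau^\bep\bD$, invoke the bound $L^4(F^{\tilde\bR_\tau^*},F^{\tilde\bS_\tau^*})\le 2W_1W_2$, and control $W_1$ via $\max_j|n/\|\bep_j^0\|^2-1|\to 0$ (their citation of Lemma~2 in \cite{bai1993limit}). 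The only structural difference is that for $\tau\ge 1$ they work under a $(4+\delta)$-th moment, so no truncation is needed there, whereas your second-moment hypothesis forces the truncation step you outline; that step is correctly identified and handled.

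Two small points worth tightening. First, the exponent in your Levy-distance perturbation bound is not the one usually quoted for Hermitian matrices; the cleanest route is to bound $\|\bR_0^\bep-\bS_0^\bep\|_F\le (2\|\bE\|+\|\bE\|^2)\|\bS_0^\bep\|_F$ directly by submultiplicativity and then apply any of the standard trace-type inequalities (e.g.\ Corollary~A.41 in Bai--Silverstein). Second, the claim that $p^{-1}\tr[(\bS_0^\bep)^2]$ is a.s.\ bounded should be argued for the \emph{truncated} matrix before invoking convergence to $1+y$, since under a bare second moment the untruncated second trace moment need not be controlled; you note this, but it is the place where the order of steps matters.
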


\begin{figure}[!htbp]
    \centering
    \includegraphics[width = 14.5cm]{./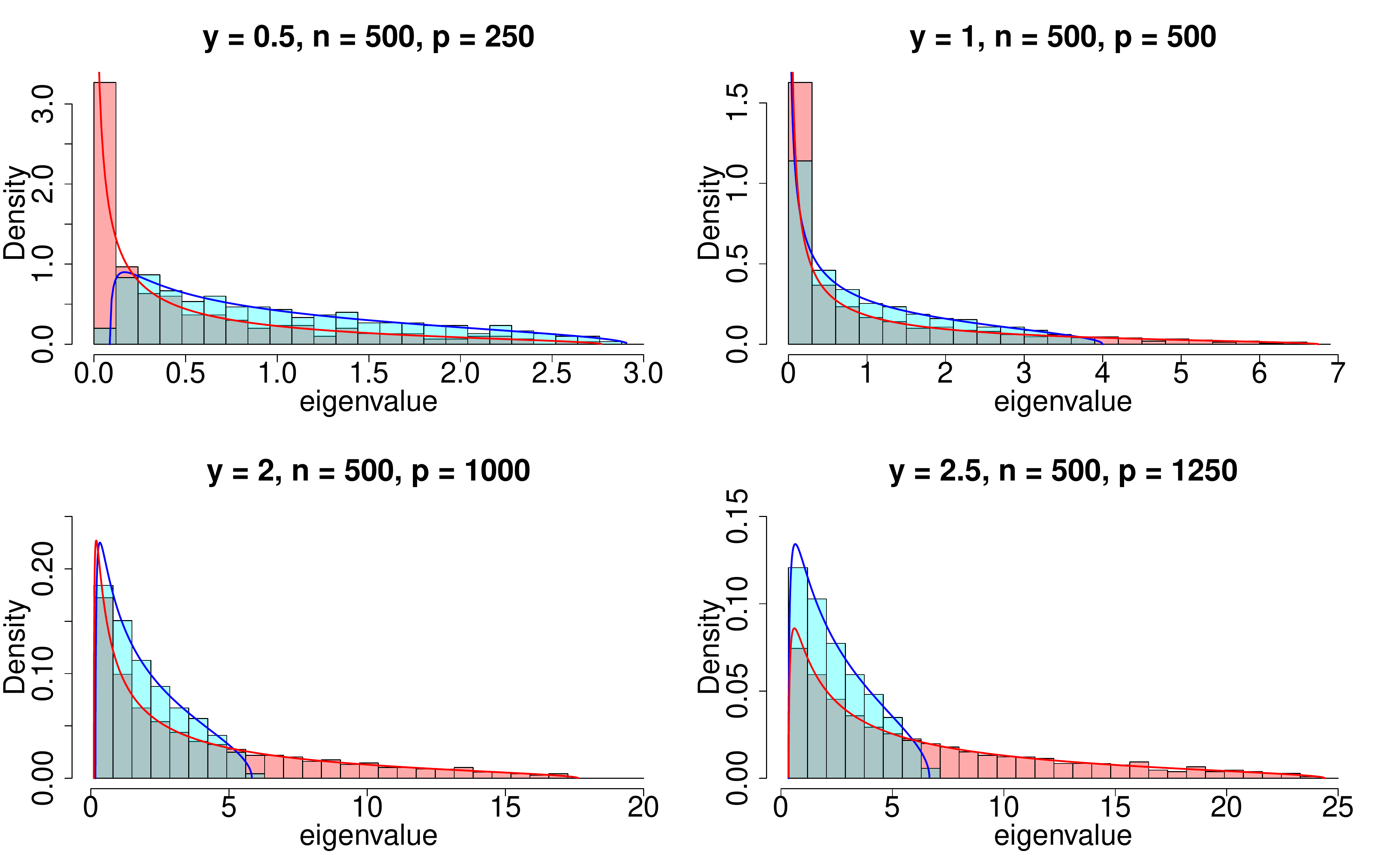}
    \caption{
    	 Histograms of the sample eigenvalues of the lag-1 sample auto-correlation matrix $\bR_{\tau}^{*}$ with $\tau = 1$ and the sample correlation matrix $\bR_{0}^{\bep}$ (light blue). Theoretical density functions of the LSDs of $\bR_{\tau}^{*}$  (red) and $\bR_{0}^{\bep}$ (blue) are exhibited in lines. In all panels, the sample size $n$ is fixed at $n=500$, and the ratio of the dimensionality to the sample size  is set as $y =0.5,1,1.5,2$ from top to bottom and left to right, respectively.}
    \label{fig:my_labelcor}
\end{figure}
\autoref{fig:my_labelcor} contrasts the LSD of $\bR_{\tau}^{*}$ (solid red curve) versus the LSD of sample correlation matrix $\bR_{0}^{\bep}$ (solid blue curve) , and the ESD of $\bR_{\tau}^{*}$ (light red histogram) with $\tau = 1$ versus the ESD of $\bR_{0}^{\bep}$ (light blue histogram)  based on i.i.d. samples from the standard normal distribution with $y = 0.5,1,2,2.5$ and $n = 500$. Clearly, the figure shows that the LSD (or ESD) of $\bR_{\tau}^{*}$ has different shapes with that of $\bR_{0}^{\bep}$ for all $(p,n)$ combinations.

\section{Proofs} 
In this section, we provide the proofs of \autoref{Theorem1} and \autoref{Theorem2}. Actually, our results rely on the results of the lag-$\tau$ sample auto-covariance matrix, which has been derived by \citep{li2015singular}. The strategy of our proof of the LSD is to show that the LSD of $\bR_{\tau}^{*}$ is the same as the LSD of $\bS_{\tau}^{*}$. Meanwhile, since the largest eigenvalue of $\bS_{\tau}^{*}$ has been studied by \citep{wang2016moment}, we show that the largest eigenvalues of $\bR_{\tau}^{*}$ and $\bS_{\tau}^{*}$ converge to the same limit.

\subsection{Substitution principle}
We first introduce the substitution principle. For the sample covariance matrix 
\begin{equation*}
	\bS_{0}^{\bep} = \frac{1}{N}\sum\limits_{i = 1}^{n}(\bep_{i}-\bar{\bep})(\bep_{i}-\bar{\bep})^{t}
\end{equation*}  
where $N = n-1$ is the adjusted sample size, \citep{zheng2015substitution} proposed the substitution principle, i.e., if we consider the non-centered sample covariance matrix
\begin{equation*}
\tilde{\bS}_{0}^{\bep} = \frac{1}{n}\sum\limits_{i = 1}^{n }\bep_{i}\bep_{i}^{t},
\end{equation*}  
with ${\rm E}(\bep_{i}) = {\bf 0}$, the asymptotic results for eigenvalues of $\bS_{0}^{\bep}$  partially hold for matrix $\tilde{\bS}_{0}^{\bep}$. Specifically, as for the first order result, $\bS_{0}^{\bep}$ and $\tilde{\bS}_{0}^{\bep}$ share the same LSD, i.e., the Mar{\v{c}}enko-Pastur distribution $F_{y}$ with index $y = \lim p/n$. As for the second order result, the central limit theorem (CLT) for linear spectral statistics (LSS), i.e., the linear functional of eigenvalues 
$
\sum\limits_{j = 1}^{p} g(\lambda_{j}),
$ 
of $\tilde{\bS}_{0}^{\bep}$ and $\bS_{0}^{\bep}$ are different. Instead, we need to replace the sample size $n$ of $\tilde{\bS}_{0}^{\bep}$ by the adjusted sample size $N = n-1$, then the CLT for LSS of $\bS_{0}^{\bep}(n)$ and $\tilde{\bS}_{0}^{\bep}(N)$ would be the same. We found that similar rules apply for sample auto-covariance and auto-correlation matrices. 

In this paper, we focus on the first order results, hence we do not need to apply the substitution principle. Denote 
\begin{equation*}
\tilde{\bR}_{\tau}^{\bep}= [{\rm diag}(\tilde{\bS}_{0}^{\bep})]^{-1/2}\tilde{\bS}_{\tau}^{\bep}[{\rm diag}(\tilde{\bS}_{0}^{\bep})]^{-1/2},\qquad \tilde{\bS}_{\tau}^{\bep}= \frac{1}{n}\sum_{i = 1}^{n}\bep_{i}\bep_{i+\tau}^{t}
\end{equation*}
and
\begin{equation*}
\tilde{\bR}_{\tau}^{*} = \tilde{\bR}_{\tau}^{\bep}(\tilde{\bR}_{\tau}^{\bep})^{t}, 	\qquad 
\tilde{\bS}_{\tau}^{*} = \tilde{\bS}_{\tau}^{\bep}(\tilde{\bS}_{\tau}^{\bep})^{t},
\end{equation*}
the following lemmas hold. 

\begin{lemma}
Under the assumptions in \autoref{Theorem1}, for fixed $\tau \geq 1$, as $p,n \rightarrow \infty$,  the empirical spectral distribution $F^{\tilde{\bS}_{\tau}^{*}}$ almost surely converges to the same LSD as $F^{\bS_{\tau}^{*}}$.
\end{lemma}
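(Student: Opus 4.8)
The plan is to realise $\bS_{\tau}^{\bep}$ as a scalar multiple of a rank-one additive perturbation of $\tilde{\bS}_{\tau}^{\bep}$, transfer this to the symmetrised matrices $\bS_{\tau}^{*}$ and $\tilde{\bS}_{\tau}^{*}$ where it becomes a perturbation of bounded rank, and then conclude from the rank inequality for empirical spectral distributions combined with the convergence of $F^{\bS_{\tau}^{*}}$ already established by \citet{li2015singular}.

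First I would record an exact algebraic identity. Put $\bar\bep=n^{-1}\sum_{i=1}^{n}\bep_{i}$; by the cyclic convention $\bep_{n+i}=\bep_{i}$ one has $\sum_{i=1}^{n}\bep_{i+\tau}=\sum_{i=1}^{n}\bep_{i}=n\bar\bep$, so on expanding the centred double sum the cross terms collapse and
\begin{equation*}
\bS_{\tau}^{\bep}=\frac{1}{n-1}\Big(\sum_{i=1}^{n}\bep_{i}\bep_{i+\tau}^{t}-n\bar\bep\bar\bep^{t}\Big)=\frac{n}{n-1}\big(\tilde{\bS}_{\tau}^{\bep}-\bar\bep\bar\bep^{t}\big).
\end{equation*}
With $c_{n}:=n/(n-1)\to1$, $\bv:=\bar\bep$ and $\bw:=\tilde{\bS}_{\tau}^{\bep}\bv$, squaring gives
\begin{equation*}
c_{n}^{-2}\bS_{\tau}^{*}=\big(\tilde{\bS}_{\tau}^{\bep}-\bv\bv^{t}\big)\big(\tilde{\bS}_{\tau}^{\bep}-\bv\bv^{t}\big)^{t}=\tilde{\bS}_{\tau}^{*}-\bw\bv^{t}-\bv\bw^{t}+\|\bv\|^{2}\bv\bv^{t}.
\end{equation*}
The range of the correction term lies in ${\rm span}\{\bv,\bw\}$, so $\rank\big(c_{n}^{-2}\bS_{\tau}^{*}-\tilde{\bS}_{\tau}^{*}\big)\le2$, a bound that is deterministic and uniform in $p,n$.

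Next I would apply the rank inequality $\|F^{\bA}-F^{\bB}\|_{\infty}\le p^{-1}\rank(\bA-\bB)$, valid for $p\times p$ Hermitian $\bA,\bB$, which yields $\|F^{c_{n}^{-2}\bS_{\tau}^{*}}-F^{\tilde{\bS}_{\tau}^{*}}\|_{\infty}\le2/p\to0$. Since $F^{\bS_{\tau}^{*}}$ converges almost surely to the LSD $F$ of $\bS_{\tau}^{*}$ \citep{li2015singular}, and $F^{c_{n}^{-2}\bS_{\tau}^{*}}(x)=F^{\bS_{\tau}^{*}}(c_{n}^{2}x)$ with $c_{n}^{2}\to1$, it follows that $F^{\tilde{\bS}_{\tau}^{*}}$ converges weakly, almost surely, to the same $F$: at any continuity point $x>0$ of $F$ one brackets $F^{\tilde{\bS}_{\tau}^{*}}(x)$ between $F^{c_{n}^{-2}\bS_{\tau}^{*}}(x_{1})$ and $F^{c_{n}^{-2}\bS_{\tau}^{*}}(x_{2})$ for continuity points $x_{1}<x<x_{2}$ of $F$ with $F(x_{2})-F(x_{1})$ small, lets $p,n\to\infty$ (using $c_{n}^{2}x_{j}\to x_{j}$), and then shrinks the bracket; the points $x\le0$ are trivial since all matrices involved are positive semidefinite.

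The argument is essentially a one-line computation followed by the rank inequality, so there is no serious obstacle. The only step demanding care is the passage through the scalar $c_{n}^{2}\to1$: because $F$ carries an atom at the origin when $y\ge1$, convergence of $F^{\bS_{\tau}^{*}}$ is only weak and not uniform, so the rescaling must be absorbed by the elementary bracketing (equivalently, tightness) argument above rather than by a uniform estimate. I would also note that $\tilde{\bS}_{\tau}^{\bep}$ is understood with the same cyclic index convention as $\bS_{\tau}^{\bep}$; had a non-cyclic truncated sum been used instead, the discrepancy would again be of rank $O(\tau)=O(1)$ and absorbed in exactly the same manner.
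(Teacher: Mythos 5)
Your proof is correct. Note, however, that the paper does not actually prove this lemma: it is stated without proof as part of the ``substitution principle'' discussion, justified only by analogy with the sample-covariance case of \citet{zheng2015substitution}, so there is no argument in the paper to compare yours against. The route you take --- the exact identity $\bS_{\tau}^{\bep}=\tfrac{n}{n-1}\big(\tilde{\bS}_{\tau}^{\bep}-\bar{\bep}\,\bar{\bep}^{t}\big)$ under the paper's cyclic index convention, the resulting rank-two difference between $c_{n}^{-2}\bS_{\tau}^{*}$ and $\tilde{\bS}_{\tau}^{*}$, Bai's rank inequality, and an elementary bracketing step to absorb the scalar $c_{n}^{2}\to 1$ --- is the standard way such first-order equivalences are established, and every step checks out; in particular the rank bound (the correction has range in ${\rm span}\{\bar{\bep},\tilde{\bS}_{\tau}^{\bep}\bar{\bep}\}$) and the care taken with the atom at the origin when $y\ge 1$ are handled correctly.
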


\begin{lemma}
Under the assumptions in \autoref{Theorem1},  for fixed $\tau \geq 1$, as $p,n \rightarrow \infty$, the largest eigenvalue of $\tilde{\bS}_{\tau}^{*}$, $\lambda_{\rm max}({\tilde{\bS}_{\tau}^{*}})$ almost surely converges to the same limit as that of $\lambda_{\rm max}(\bS_{\tau}^{*})$.
\end{lemma}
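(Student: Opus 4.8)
The plan is to establish Lemma 2 by showing that passing from $\tilde{\bS}_\tau^*$ (built from the sample size $n$) to $\bS_\tau^*$ (built from $N=n-1$) and centering the $\bep_i$ perturbs the largest eigenvalue by a vanishing amount. First I would fix the relationship between the two matrices. Writing $\bar\bep=\frac1n\sum_i\bep_i$, one has $\bS_\tau^{\bep}=\frac1N\sum_i(\bep_i-\bar\bep)(\bep_{i+\tau}-\bar\bep)^t$, and expanding the centering gives $\sum_i(\bep_i-\bar\bep)(\bep_{i+\tau}-\bar\bep)^t=\sum_i\bep_i\bep_{i+\tau}^t-n\bar\bep\bar\bep^t$ (using the cyclic convention $\bep_i=\bep_{n+i}$, so that $\sum_i\bep_{i+\tau}=n\bar\bep$). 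Hence
\begin{equation*}
\bS_\tau^{\bep}=\frac{n}{N}\,\tilde{\bS}_\tau^{\bep}-\frac{n}{N}\bar\bep\bar\bep^t .
\end{equation*}
The factor $n/N\to 1$, and the rank-one correction $\bar\bep\bar\bep^t$ has operator norm $\|\bar\bep\|^2$, which under Assumption (A) is $O_{a.s.}(p/n)=O_{a.s.}(1)$; more importantly, since it is a single rank-one term, by Weyl's interlacing inequality its presence can shift at most one eigenvalue and, for the \emph{ordered} spectrum, $|\lambda_k(\bS_\tau^{\bep})-\tfrac nN\lambda_k(\tilde\bS_\tau^{\bep})|$ is controlled in a way that does not affect the bulk edge. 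But since we want singular values, it is cleanest to work with $\bS_\tau^*$ directly.

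The key step is then a perturbation estimate at the level of the symmetrized matrices. Using $\bS_\tau^{\bep}=\frac nN(\tilde\bS_\tau^{\bep}-\bar\bep\bar\bep^t)$ and the triangle inequality for operator norm together with Weyl's inequality $|\lambda_{\max}(\bA\bA^t)^{1/2}-\lambda_{\max}(\bB\bB^t)^{1/2}|\le\|\bA-\bB\|$, I would write
\begin{equation*}
\bigl|\sqrt{\lambda_{\max}(\bS_\tau^*)}-\tfrac nN\sqrt{\lambda_{\max}(\tilde\bS_\tau^*)}\bigr|\le \tfrac nN\|\bar\bep\bar\bep^t\|=\tfrac nN\|\bar\bep\|^2 .
\end{equation*}
So it suffices to show $\|\bar\bep\|^2\to 0$ almost surely. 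This does \emph{not} follow from the crude bound $\|\bar\bep\|^2=O(p/n)$; instead one needs the standard fact that for i.i.d.\ mean-zero unit-variance entries with a $(4+\delta)$-th moment, $\|\bar\bep\|^2=\frac1{n^2}\sum_{j=1}^p\bigl(\sum_{i=1}^n\epsilon_{i,j}\bigr)^2$ concentrates: $E\|\bar\bep\|^2=p/n\cdot(1/n)\cdot$ wait — actually $E\|\bar\bep\|^2=\frac{1}{n^2}\cdot p\cdot n=\frac pn$, which is $O(1)$, not $o(1)$. This means the naive bound above is \emph{not} good enough, and here is where the real work lies.

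The main obstacle, therefore, is that the centering correction is genuinely of order $1$ in operator norm, so one cannot simply dominate it; one must instead argue that although $\bar\bep\bar\bep^t$ has $O(1)$ norm, $\tilde\bS_\tau^{\bep}$ and $\tilde\bS_\tau^{\bep}-\bar\bep\bar\bep^t$ have the \emph{same} largest singular value asymptotically because the vector $\bar\bep$ is asymptotically aligned with a bulk direction rather than with the top singular vectors of $\tilde\bS_\tau^{\bep}$. The cleanest route, which I would follow, mirrors \citet{jiang2004limiting}: observe that $\tilde\bS_\tau^{\bep}-\bar\bep\bar\bep^t = \tilde\bS_\tau^{\bep} - \frac1n(\bep_\cdot\mathbf{1})(\bep_\cdot\mathbf{1})^t/n$ can be written as a product $\bE(\bI-\tfrac1n\mathbf{1}\mathbf{1}^t)\bE_\tau^t/N$ up to the centering in \emph{both} factors, and that replacing the projection $\bI-\tfrac1n\mathbf 1\mathbf 1^t$ (rank $n-1$) by $\bI$ (rank $n$) is a rank-one modification of an $n\times n$ matrix sitting \emph{inside} the product, so by the singular-value interlacing theorem for $\bE\bP\bE_\tau^t$ versus $\bE\bE_\tau^t$ one gets $|\sigma_k(\bS_\tau^{\bep})-\sigma_k(\frac nN\tilde\bS_\tau^{\bep})|$ bounded by the next singular value, forcing the top singular value to have the same limit $\sqrt b$. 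Combining this interlacing with Lemma 1 (which pins down the right edge of the bulk) and the already-established convergence $\lambda_{\max}(\bS_\tau^*)\to b$ of \citet{wang2016moment} closes the argument. I expect the verification that the interlacing rank-one correction sits as an \emph{interior} factor (so that it perturbs singular values one-for-one rather than additively in norm) to be the delicate bookkeeping point.
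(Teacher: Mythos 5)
First, a point of reference: the paper does not actually prove this lemma --- it is one of the four ``substitution principle'' lemmas that are stated without proof before the main arguments --- so there is no authorial proof to measure yours against, and your proposal has to stand on its own. Your setup is correct and your diagnosis of the difficulty is exactly right: with the cyclic convention $\bep_{i}=\bep_{n+i}$ one has the exact identity $\bS_{\tau}^{\bep}=\tfrac{n}{N}\tilde{\bS}_{\tau}^{\bep}-\tfrac{n}{N}\bar{\bep}\bar{\bep}^{t}$, the correction is rank one with operator norm $\Vert\bar{\bep}\Vert^{2}\to y>0$ almost surely, and therefore the additive norm bound is useless. You were right to distrust it.

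The gap is in the proposed repair. Singular-value interlacing for a rank-one perturbation (equivalently, for inserting the rank-$(n-1)$ projection $\bP=\bI_{n}-\tfrac{1}{n}\mathbf{1}\mathbf{1}^{t}$ as an interior factor in $\tfrac{1}{N}\bE_{0}^{t}\bP\bE_{\tau}$, in the paper's notation) only gives $\sigma_{k+1}(\bA)\le\sigma_{k}(\bA+\bC)\le\sigma_{k-1}(\bA)$ when $\mathrm{rank}(\bC)=1$, and at $k=1$ the upper bound is vacuous. So interlacing yields $\liminf\lambda_{\max}(\bS_{\tau}^{*})\ge b$ (which already follows from the LSD having right edge $b$) and $\sigma_{2}(\bS_{\tau}^{\bep})\le\tfrac{n}{N}\sigma_{1}(\tilde{\bS}_{\tau}^{\bep})$, but it cannot rule out that $\sigma_{1}(\bS_{\tau}^{\bep})$ is an \emph{outlier} strictly above $\sqrt{b}$: a rank-one perturbation of order-one operator norm can create exactly such a spike (the BBP phenomenon), and interlacing never upper-bounds the top singular value of the perturbed matrix by that of the unperturbed one. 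The reason the analogous step is painless for $\tau=0$ (and in \citet{jiang2004limiting}) is Loewner monotonicity: $\bP\preceq\bI_{n}$ gives $\bE_{0}^{t}\bP\bE_{0}\preceq\bE_{0}^{t}\bE_{0}$, hence an upper bound on the largest eigenvalue with no interlacing at all. For $\tau\ge1$ the matrix $\bE_{0}^{t}\bP\bE_{\tau}$ is not a Gram matrix and no such monotonicity of the top singular value is available. Closing the lemma therefore needs a genuinely additional ingredient --- for instance a no-outlier argument showing $\bar{\bep}$ is asymptotically not aligned with the top left/right singular subspaces of $\tilde{\bS}_{\tau}^{\bep}$ (a resolvent/BBP computation), or a rerun of the moment bound of Theorem 4.1 of \citet{wang2016moment} directly on the centered matrix. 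What you flag as ``delicate bookkeeping'' is in fact the entire content of the lemma, and the interlacing mechanism you propose does not deliver it.
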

\begin{lemma}
Under the assumptions in \autoref{Theorem1},  for fixed $\tau \geq 1$, as $p,n \rightarrow \infty$, the empirical spectral distribution $F^{\tilde{\bR}_{\tau}^{*}}$ almost surely converges to  the same LSD as $F^{\bR_{\tau}^{*}}$, the distribution with a density function given by \eqref{den}.
\end{lemma}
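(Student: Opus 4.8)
The plan is to show that the centering in the definition of $\bR_\tau^{\bep}$ versus $\tilde{\bR}_\tau^{\bep}$ produces only a negligible perturbation, so that $F^{\tilde{\bR}_\tau^*}$ and $F^{\bR_\tau^*}$ share the same limit, which by \autoref{Theorem1} is the distribution with density \eqref{den}. First I would control the diagonal normalizers. Writing $\bS_0^{\bep} = \frac{1}{n-1}\sum_i (\bep_i - \bar\bep)(\bep_i - \bar\bep)^t$ and $\tilde{\bS}_0^{\bep} = \frac{1}{n}\sum_i \bep_i\bep_i^t$, the $j$-th diagonal entries are $\frac{1}{n-1}\sum_i(\epsilon_{ij}-\bar\epsilon_{\cdot j})^2$ and $\frac{1}{n}\sum_i\epsilon_{ij}^2$ respectively; by the law of large numbers (using $E\epsilon_{ij}^2 = 1$ and the moment bound in Assumption (A)) each converges to $1$, and in fact one shows $\max_{1\le j\le p}\big|[{\rm diag}(\bS_0^{\bep})]_{jj} - 1\big| \to 0$ and similarly for $\tilde{\bS}_0^{\bep}$ almost surely, using a union bound together with a moment/large-deviation estimate that exploits the $(4+\delta)$-th moment and $p/n\to y$. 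Consequently both diagonal matrices, raised to the $-1/2$ power, are close to $\bI_p$ uniformly, and their difference has operator norm tending to $0$.

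Next I would compare the auto-covariance parts. The difference $\bS_\tau^{\bep} - \tilde{\bS}_\tau^{\bep}$ consists of the centering cross terms $-\frac{1}{n-1}\sum_i \bep_i\bar\bep^t - \frac{1}{n-1}\sum_i \bar\bep\,\bep_{i+\tau}^t + \frac{n}{n-1}\bar\bep\,\bar\bep^t$ plus the $\frac{1}{n-1}$ versus $\frac1n$ scaling discrepancy; each of these is a matrix of rank $O(1)$ (the cross terms) or a uniformly small multiple of $\tilde{\bS}_\tau^{\bep}$ (the scaling term). Since a finite-rank perturbation does not affect the LSD and the scaling discrepancy is $o(1)$ in operator norm times a matrix whose norm is almost surely bounded (by Lemma 2 together with the boundedness of $\lambda_{\max}(\bS_\tau^*)$), Weyl's inequality and the rank inequality for distribution functions (i.e. $\|F^{\bA} - F^{\bA+\bB}\|_\infty \le \frac{2\,\mathrm{rank}(\bB)}{p}$) give that $F^{\bS_\tau^*}$ and $F^{\tilde{\bS}_\tau^*}$ have the same limit — this is precisely Lemma 1, which I may assume. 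I would then combine these facts: writing $\bR_\tau^{\bep} = \bD^{-1/2}\bS_\tau^{\bep}\bD^{-1/2}$ and $\tilde{\bR}_\tau^{\bep} = \tilde{\bD}^{-1/2}\tilde{\bS}_\tau^{\bep}\tilde{\bD}^{-1/2}$ with $\bD = {\rm diag}(\bS_0^{\bep})$, $\tilde\bD = {\rm diag}(\tilde\bS_0^{\bep})$, a telescoping decomposition expresses $\bR_\tau^{\bep} - \tilde{\bR}_\tau^{\bep}$ as a sum of terms each of which is either finite-rank or has operator norm $o(1)$ times an almost surely bounded factor. Passing to $\bR_\tau^* = \bR_\tau^{\bep}(\bR_\tau^{\bep})^t$ and $\tilde\bR_\tau^* = \tilde\bR_\tau^{\bep}(\tilde\bR_\tau^{\bep})^t$, another application of the rank inequality and a standard bound on $\|F^{\bA\bA^t} - F^{\bB\bB^t}\|$ in terms of $\|\bA - \bB\|$ and the norms of $\bA,\bB$ (e.g. via the Hoffman–Wielandt or Weyl inequality for singular values, using $L\to\infty$ truncation if needed) shows $F^{\bR_\tau^*}$ and $F^{\tilde\bR_\tau^*}$ converge to the same limit.

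The main obstacle will be handling the operator-norm (rather than merely rank or trace-norm) control of the scaling discrepancy and the diagonal-normalizer discrepancy: to conclude that $o(1)$-norm perturbations of $\tilde{\bS}_\tau^{\bep}$ or $\tilde{\bR}_\tau^{\bep}$ do not move the ESD, one needs an a priori almost-sure bound on the operator norm of $\tilde{\bS}_\tau^*$ (equivalently the largest singular value of $\tilde{\bS}_\tau^{\bep}$), which is exactly the content of Lemma 2 (built on \citet{wang2016moment}); without it, a perturbation small in norm but multiplying an unbounded matrix could in principle survive in the limit. A secondary technical point is that $\tilde{\bS}_\tau^{\bep}$ is not symmetric, so one works throughout with the symmetrizations $\tilde{\bS}_\tau^*$, $\tilde{\bR}_\tau^*$ and must be careful that the bilinear diagonal-normalization $\bD^{-1/2}(\cdot)\bD^{-1/2}$ interacts correctly with the product $\bR_\tau^{\bep}(\bR_\tau^{\bep})^t$; this is routine once one notes $\bD^{-1/2}\to\bI_p$ and $\tilde\bD^{-1/2}\to\bI_p$ in operator norm. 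Given these ingredients, the identification of the common limit with the density \eqref{den} is immediate from \autoref{Theorem1} applied to $\bR_\tau^*$ together with Lemma 1.
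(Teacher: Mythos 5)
The paper states this lemma without proof: it is one of four ``substitution principle'' assertions in Section 3.1 that are simply declared to hold, with the actual proofs (Lemmas 3.5 and 3.6) devoted to the tilde-versus-tilde comparisons $F^{\tilde{\bR}_\tau^*}$ vs.\ $F^{\tilde{\bS}_\tau^*}$ and their largest eigenvalues. So there is no argument in the paper to compare yours against line by line; your proposal supplies the missing justification, and it is sound. Moreover, the ingredients you invoke are exactly the ones the paper uses for the analogous comparisons: the centering terms $\bar{\bep}$ contribute only a rank-$O(1)$ perturbation of $\bS_\tau^{\bep}$ (in fact rank one, since $\sum_i\bep_i\bar{\bep}^t=n\bar{\bep}\bar{\bep}^t$ under the cyclic convention), the $1/(n-1)$ versus $1/n$ discrepancy is $o(1)$ in operator norm against the a.s.\ bounded $\Vert\tilde{\bS}_\tau^{\bep}\Vert_2$ from \citet{wang2016moment}, and the uniform control $\max_j\vert[{\rm diag}(\bS_0^{\bep})]_{jj}-1\vert\to 0$ follows from Lemma 2 of \citet{bai1993limit} plus a Rosenthal-type bound on $\max_j\bar{\epsilon}_{\cdot j}^2$ using the $(4+\delta)$-th moment — the same $\Vert\bD-\bI\Vert_2\to 0$ device the paper uses in its proof of Theorem 2.2. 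You also correctly identify the one genuinely necessary a priori input (an almost-sure operator-norm bound on $\tilde{\bS}_\tau^{\bep}$) and how the non-symmetry forces you to work with singular values and the rank/difference inequalities for $F^{\bA\bA^t}$. The only caveat is your closing sentence: in the paper's logical architecture the density \eqref{den} is first established for the non-centered matrix (via Lemma 3.5 and \citet{li2015singular}) and then transferred to $\bR_\tau^*$ by this very lemma, so you should identify the common limit from the $\tilde{\bR}_\tau^*$ side rather than by ``applying Theorem 2.1 to $\bR_\tau^*$,'' which would be circular; since the asymptotic equivalence you prove is symmetric, this is a presentational fix rather than a mathematical one.
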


\begin{lemma}
Under the assumptions in \autoref{Theorem1},  for fixed $\tau \geq 1$, as $p,n \rightarrow \infty$, the largest eigenvalue of $\tilde{\bR}_{\tau}^{*}$, $\lambda_{\rm max}({\tilde{\bR}_{\tau}^{*}})$ almost surely converges to the same limit as that of $\lambda_{\rm max}(\bR_{\tau}^{*})$.
\end{lemma}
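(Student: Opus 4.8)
Rather than comparing $\tilde{\bR}_{\tau}^{*}$ and $\bR_{\tau}^{*}$ with each other, the plan is to route both through the corresponding auto-covariance matrices, whose largest singular values are already under control. Write $\bD = {\rm diag}(\bS_{0}^{\bep})$ and $\tilde{\bD} = {\rm diag}(\tilde{\bS}_{0}^{\bep})$, so that $\bR_{\tau}^{\bep} = \bD^{-1/2}\bS_{\tau}^{\bep}\bD^{-1/2}$, $\tilde{\bR}_{\tau}^{\bep} = \tilde{\bD}^{-1/2}\tilde{\bS}_{\tau}^{\bep}\tilde{\bD}^{-1/2}$, and, with $\|\cdot\|$ the spectral norm, $\lambda_{\rm max}(\bR_{\tau}^{*}) = \|\bR_{\tau}^{\bep}\|^{2}$ and $\lambda_{\rm max}(\tilde{\bR}_{\tau}^{*}) = \|\tilde{\bR}_{\tau}^{\bep}\|^{2}$. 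A direct comparison is unattractive because the passage from $\tilde{\bR}_{\tau}^{\bep}$ to $\bR_{\tau}^{\bep}$ involves, besides a diagonal similarity that tends to the identity, a rank-one subtraction $\bu\bu^{t}$ with $\|\bu\|^{2}\to y>0$ coming from the data centering, and a rank-one perturbation of bounded but non-vanishing norm can by itself create an outlying largest singular value; routing through $\bS_{\tau}^{\bep}$ and $\tilde{\bS}_{\tau}^{\bep}$ avoids this.

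The first step is a deterministic sandwich. Since $\bD$ is diagonal and, for $n$ large, almost surely positive definite, submultiplicativity of $\|\cdot\|$ applied to $\bR_{\tau}^{\bep} = \bD^{-1/2}\bS_{\tau}^{\bep}\bD^{-1/2}$ and to the identity $\bS_{\tau}^{\bep} = \bD^{1/2}\bR_{\tau}^{\bep}\bD^{1/2}$ gives
$$ \frac{\|\bS_{\tau}^{\bep}\|}{\max_{1\le j\le p}\bD_{jj}} \;\le\; \|\bR_{\tau}^{\bep}\| \;\le\; \frac{\|\bS_{\tau}^{\bep}\|}{\min_{1\le j\le p}\bD_{jj}}, $$
and the identical chain holds with $(\bR_{\tau}^{\bep}, \bS_{\tau}^{\bep}, \bD)$ replaced by $(\tilde{\bR}_{\tau}^{\bep}, \tilde{\bS}_{\tau}^{\bep}, \tilde{\bD})$.

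The only genuinely probabilistic ingredient is the uniform convergence of the diagonal normalizers: $\max_{1\le j\le p}|\tilde{\bD}_{jj} - 1| \to 0$ and $\max_{1\le j\le p}|\bD_{jj} - 1| \to 0$ almost surely. For $\tilde{\bD}_{jj} = n^{-1}\sum_{i=1}^{n}\epsilon_{i,j}^{2}$ this is a maximal strong law for the columnwise sample second moments; under Assumption (A) it follows by truncating $\epsilon_{i,j}^{2}$, bounding the $(2+\delta/2)$-th moment of the centered truncated column sums by Rosenthal's inequality, and combining a union bound over the $p = O(n)$ columns with the Borel--Cantelli lemma, as in \citet{jiang2004limiting}. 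For $\bD_{jj} = (n/N)\bigl( n^{-1}\sum_{i=1}^{n}\epsilon_{i,j}^{2} - (\bar{\bep})_{j}^{2} \bigr)$ one combines this with $\max_{1\le j\le p}(\bar{\bep})_{j}^{2} \to 0$ almost surely (the same maximal-inequality argument applied to the column means $(\bar{\bep})_{j} = n^{-1}\sum_{i}\epsilon_{i,j}$, which here needs no truncation) and $n/N \to 1$. Hence $\max_{j}\bD_{jj}$, $\min_{j}\bD_{jj}$, $\max_{j}\tilde{\bD}_{jj}$ and $\min_{j}\tilde{\bD}_{jj}$ all tend to $1$ almost surely.

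Finally, by \citet{wang2016moment} the largest singular value $\|\bS_{\tau}^{\bep}\| = \sqrt{\lambda_{\rm max}(\bS_{\tau}^{*})}$ converges almost surely to $\sqrt{b}$, the right end point of the support of the LSD of $\bS_{\tau}^{*}$ (equal to the $b$ of \autoref{Theorem1}, since by the remark following \autoref{Theorem1} the two LSDs coincide); by the earlier lemma comparing $\lambda_{\rm max}(\tilde{\bS}_{\tau}^{*})$ with $\lambda_{\rm max}(\bS_{\tau}^{*})$, $\|\tilde{\bS}_{\tau}^{\bep}\|$ has the same limit. Substituting these limits and the limits of the diagonal normalizers into the two sandwiches yields $\|\bR_{\tau}^{\bep}\| \to \sqrt{b}$ and $\|\tilde{\bR}_{\tau}^{\bep}\| \to \sqrt{b}$ almost surely, i.e. $\lambda_{\rm max}(\bR_{\tau}^{*})$ and $\lambda_{\rm max}(\tilde{\bR}_{\tau}^{*})$ both converge almost surely to $b$, which is the assertion. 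I expect the main obstacle to be the uniform strong law for the diagonal entries: for small $\delta$ a naive union bound over columns is not summable, so the truncation step is essential, and this is precisely the technical device that must be imported and adapted from \citet{jiang2004limiting}; the remainder is either soft (submultiplicativity of the norm) or quoted.
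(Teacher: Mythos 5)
Your proposal is correct, and it is worth noting up front that the paper itself supplies no proof of this lemma: it is one of four substitution-principle equivalences that are simply asserted (``the following lemmas hold''), with the actual work done only for the adjacent comparison $\vert\sqrt{\lambda_{\rm max}(\tilde{\bR}_{\tau}^{*})}-\sqrt{\lambda_{\rm max}(\tilde{\bS}_{\tau}^{*})}\vert\to 0$. What you do differently is to fill that gap by routing both $\bR_{\tau}^{\bep}$ and $\tilde{\bR}_{\tau}^{\bep}$ through their auto-covariance counterparts via the multiplicative sandwich $\Vert\bS_{\tau}^{\bep}\Vert/\max_{j}\bD_{jj}\le\Vert\bR_{\tau}^{\bep}\Vert\le\Vert\bS_{\tau}^{\bep}\Vert/\min_{j}\bD_{jj}$, whereas the paper's proof of its own $\tilde{\bR}$-versus-$\tilde{\bS}$ lemma uses an additive triangle-inequality bound $\Vert\tilde{\bR}_{\tau}-\tilde{\bS}_{\tau}\Vert_{2}\le\Vert\tfrac{1}{n}\bE_{0}^{t}\bE_{\tau}\Vert_{2}(\Vert\bD-\bI\Vert_{2}^{2}+2\Vert\bD-\bI\Vert_{2})$; the two are essentially equivalent and both reduce to the same probabilistic input, namely the maximal strong law $\max_{j}\vert n^{-1}\Vert\bep_{j}^{0}\Vert^{2}-1\vert\to 0$ a.s.\ (the paper quotes Lemma~2 of Bai and Yin (1993) for this, so you could cite that directly rather than redoing the truncation--Rosenthal argument). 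Your diagnosis of why a head-on comparison of $\bR_{\tau}^{\bep}$ with $\tilde{\bR}_{\tau}^{\bep}$ is dangerous --- the centering contributes a rank-one term $\bar{\bep}\bar{\bep}^{t}$ of non-vanishing spectral norm $\approx y$, which a crude norm bound cannot dismiss for the \emph{largest} eigenvalue --- is exactly right, and deferring that difficulty to the paper's (also unproved) covariance-level Lemma is consistent with the paper's own logical structure. One small correction: the paper invokes Theorem~4.1 of Wang and Yao for the \emph{non-centered} matrix, $\lambda_{\rm max}(\tilde{\bS}_{\tau}^{*})\to b$, so your final chain should run Wang--Yao $\Rightarrow\Vert\tilde{\bS}_{\tau}^{\bep}\Vert\to\sqrt{b}$, then the covariance substitution lemma $\Rightarrow\Vert\bS_{\tau}^{\bep}\Vert\to\sqrt{b}$, rather than the reverse; this does not affect the conclusion, and your argument in fact yields the stronger statement that both $\lambda_{\rm max}(\bR_{\tau}^{*})$ and $\lambda_{\rm max}(\tilde{\bR}_{\tau}^{*})$ converge almost surely to $b$, i.e.\ Theorem~2.2 itself.
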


Based on these asymptotic equivalence results, in the following proofs we only consider the non-centered lag-$\tau$ sample auto-covariance matrix $\tilde{\bS}^{\bep}_{\tau}$ and the non-centered lag-$\tau$ sample auto-correlation matrix $\tilde{\bR}^{\bep}_{\tau}$.  

\subsection{Proof of Theorem 2.1}
\begin{lemma}
Under the assumptions in \autoref{Theorem1}, let $L^{4}\left(\cdot,\cdot\right)$ be the Levy distance, for fixed $\tau \geq 1$,  as $p,n \rightarrow \infty$,  we have
\begin{equation*}
L^{4}\left(F^{\tilde{\bR}^{*}_{\tau}},F^{\tilde{\bS}^{*}_{\tau}}\right)\rightarrow 0 \quad a.s.
\end{equation*}
\label{lemma1}
\end{lemma}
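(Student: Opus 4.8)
The plan is to show that $\tilde{\bR}_{\tau}^{\bep}$ differs from $\tilde{\bS}_{\tau}^{\bep}$ only by a vanishingly small factor in operator norm, and then to push this through to the Gram matrices via Weyl's eigenvalue perturbation inequality. Write $\bD={\rm diag}(\tilde{\bS}_{0}^{\bep})={\rm diag}(d_{1},\dots,d_{p})$ with $d_{j}=n^{-1}\sum_{i=1}^{n}\epsilon_{ij}^{2}$, so that $\tilde{\bR}_{\tau}^{\bep}=\bD^{-1/2}\tilde{\bS}_{\tau}^{\bep}\bD^{-1/2}$, and put $\bDe=\bD^{-1/2}-\bI$. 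For Hermitian matrices $\bM,\bN$ one has $L(F^{\bM},F^{\bN})\le\|\bM-\bN\|_{\rm op}$, and the L\'evy distance between probability measures is at most one, so
\begin{equation*}
L^{4}\bigl(F^{\tilde{\bR}_{\tau}^{*}},F^{\tilde{\bS}_{\tau}^{*}}\bigr)\le L\bigl(F^{\tilde{\bR}_{\tau}^{*}},F^{\tilde{\bS}_{\tau}^{*}}\bigr)\le\|\tilde{\bR}_{\tau}^{*}-\tilde{\bS}_{\tau}^{*}\|_{\rm op}.
\end{equation*}
From $\tilde{\bR}_{\tau}^{*}-\tilde{\bS}_{\tau}^{*}=\tilde{\bR}_{\tau}^{\bep}(\tilde{\bR}_{\tau}^{\bep}-\tilde{\bS}_{\tau}^{\bep})^{t}+(\tilde{\bR}_{\tau}^{\bep}-\tilde{\bS}_{\tau}^{\bep})(\tilde{\bS}_{\tau}^{\bep})^{t}$ and $\tilde{\bR}_{\tau}^{\bep}-\tilde{\bS}_{\tau}^{\bep}=\bDe\,\tilde{\bS}_{\tau}^{\bep}+\tilde{\bS}_{\tau}^{\bep}\,\bDe+\bDe\,\tilde{\bS}_{\tau}^{\bep}\,\bDe$, submultiplicativity of the operator norm gives
\begin{equation*}
\|\tilde{\bR}_{\tau}^{*}-\tilde{\bS}_{\tau}^{*}\|_{\rm op}\le\|\bDe\|_{\rm op}\bigl(2+\|\bDe\|_{\rm op}\bigr)\,\|\tilde{\bS}_{\tau}^{\bep}\|_{\rm op}\bigl(\|\tilde{\bR}_{\tau}^{\bep}\|_{\rm op}+\|\tilde{\bS}_{\tau}^{\bep}\|_{\rm op}\bigr).
\end{equation*}
So the lemma reduces to two facts: (i) $\|\bDe\|_{\rm op}=\max_{1\le j\le p}\bigl|d_{j}^{-1/2}-1\bigr|\to0$ almost surely, and (ii) $\|\tilde{\bS}_{\tau}^{\bep}\|_{\rm op}$ and $\|\tilde{\bR}_{\tau}^{\bep}\|_{\rm op}$ are almost surely bounded.

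Fact (ii) is the routine one: $\|\tilde{\bS}_{\tau}^{\bep}\|_{\rm op}^{2}=\lambda_{\rm max}(\tilde{\bS}_{\tau}^{*})$, which by the substitution lemma for the largest eigenvalue together with \citet{wang2016moment} converges almost surely to the finite limit $b$ of $\lambda_{\rm max}(\bS_{\tau}^{*})$; and once (i) holds, $\|\bD^{-1/2}\|_{\rm op}=\max_{j}d_{j}^{-1/2}\to1$ almost surely, so $\|\tilde{\bR}_{\tau}^{\bep}\|_{\rm op}\le\|\bD^{-1/2}\|_{\rm op}^{2}\,\|\tilde{\bS}_{\tau}^{\bep}\|_{\rm op}$ is almost surely bounded too.

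Fact (i) is what I expect to be the crux. It follows from the uniform strong law $\max_{1\le j\le p}|d_{j}-1|\to0$ almost surely, since $x\mapsto x^{-1/2}$ is Lipschitz in a neighbourhood of $1$. I would prove this essentially as in \citet{jiang2004limiting}: $d_{j}-1=n^{-1}\sum_{i=1}^{n}(\epsilon_{ij}^{2}-1)$ is an average of independent, mean-zero variables whose $(2+\delta/2)$-th moment is finite by Assumption (A), so Rosenthal's inequality gives $E|d_{j}-1|^{2+\delta/2}=O\bigl(n^{-1-\delta/4}\bigr)$ uniformly in $j$, and a Bonferroni bound then yields $\Pr\bigl(\max_{j}|d_{j}-1|>\varepsilon\bigr)=O\bigl(p\,n^{-1-\delta/4}\bigr)=O\bigl(n^{-\delta/4}\bigr)$. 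This delivers convergence in probability; to upgrade to almost sure convergence I would run Borel--Cantelli along a geometric subsequence $n_{k}=\lfloor(1+\gamma)^{k}\rfloor$ (along which $\sum_{k}n_{k}^{-\delta/4}<\infty$) and interpolate to all $n$ using that $\sum_{i=1}^{n}\epsilon_{ij}^{2}$ is nondecreasing in $n$ while $n_{k+1}/n_{k}\to1+\gamma$, finally letting $\gamma\downarrow0$. With only the $(4+\delta)$-th moment available the Bonferroni exponent cannot be pushed past $-\delta/4$, which is exactly where Assumption (A) is used; an equivalent route truncates $\epsilon_{ij}^{2}$ at a slowly growing level and controls the discarded tail by Borel--Cantelli via the $(4+\delta)$-th moment. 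Combining (i) and (ii) with the two displays, $\|\tilde{\bR}_{\tau}^{*}-\tilde{\bS}_{\tau}^{*}\|_{\rm op}\to0$ almost surely, hence $L^{4}(F^{\tilde{\bR}_{\tau}^{*}},F^{\tilde{\bS}_{\tau}^{*}})\to0$ almost surely, which is the claim.
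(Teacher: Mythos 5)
Your argument is correct, but it takes a genuinely different route from the paper. The paper bounds $L^{4}(F^{\tilde{\bR}_{\tau}^{*}},F^{\tilde{\bS}_{\tau}^{*}})$ by the Bai--Silverstein type difference inequality $\frac{2}{p}{\rm tr}\big((\tilde{\bR}_{\tau}^{\bep}-\tilde{\bS}_{\tau}^{\bep})(\tilde{\bR}_{\tau}^{\bep}-\tilde{\bS}_{\tau}^{\bep})^{t}\big)\cdot\frac{1}{p}{\rm tr}(\tilde{\bR}_{\tau}^{*}+\tilde{\bS}_{\tau}^{*})$, so everything reduces to normalized traces, which are handled by the law of large numbers, Chebyshev plus Borel--Cantelli, and the maximal law $\max_{j}|n^{-1}\sum_{i}\epsilon_{i,j}^{2}-1|\to0$ a.s.\ from \citet{bai1993limit}; no control of any operator norm is needed. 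You instead bound the L\'evy distance by $\|\tilde{\bR}_{\tau}^{*}-\tilde{\bS}_{\tau}^{*}\|_{\rm op}$ via Weyl's inequality, which forces you to establish almost sure boundedness of $\|\tilde{\bS}_{\tau}^{\bep}\|_{\rm op}$ --- this is exactly the largest-singular-value theorem of \citet{wang2016moment}, a far heavier input than anything the paper uses for this lemma (the paper only invokes it later, for \autoref{Theorem2}). There is no circularity, since that theorem is independent of the present lemma, and in exchange you prove the stronger statement $\|\tilde{\bR}_{\tau}^{*}-\tilde{\bS}_{\tau}^{*}\|_{\rm op}\to0$ a.s., which simultaneously delivers the paper's companion lemma $|\sqrt{\lambda_{\rm max}(\tilde{\bR}_{\tau}^{*})}-\sqrt{\lambda_{\rm max}(\tilde{\bS}_{\tau}^{*})}|\to0$ (indeed your operator-norm decomposition through $\bDe=\bD^{-1/2}-\bI$ is essentially the computation the paper performs in its proof of Theorem 2.2). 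Your from-scratch Rosenthal-plus-subsequence derivation of $\max_{j}|d_{j}-1|\to0$ a.s.\ is sound and has the mild advantage of covering the independent non-identically-distributed entries of Assumption (A) directly, whereas the paper simply cites \citet{bai1993limit}. In short: your proof is valid and more economical globally, but less self-contained for the LSD result taken on its own.
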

\begin{proof}
First we consider the case $\tau =1$. Suppose $\bep_{j}^{0} = \left(\epsilon_{1,j},\ldots,\epsilon_{n,j}\right)^{t}$, $\bep_{j}^{1} = \left(\epsilon_{2,j},\ldots,\epsilon_{n+1,j}\right)^{t}$, then we can define the non-centered sample auto-correlation matrix $\tilde{\bR}_{1}^{\bep}$ and the non-centered sample auto-covariance matrix $\tilde{\bS}_{1}^{\bep}$ as follows: 
\begin{equation*}
\tilde{\bR}_{1}^{\bep} = \bX_{0}^{t}\bX_{1} \qand \tilde{\bS}_{1}^{\bep} = \frac{1}{n}\bE_{0}^{t}\bE_{1}
\end{equation*} 
where $\bX_{0} =\left(\frac{\bep_{1}^{0}}{\Vert \bep_{1}^{0}\Vert},\ldots,\frac{\bep_{p}^{0}}{\Vert \bep_{p}^{0}\Vert}\right)$, $\bX_{1} =\left(\frac{\bep_{1}^{1}}{\Vert \bep_{1}^{0}\Vert},\ldots,\frac{\bep_{p}^{1}}{\Vert \bep_{p}^{0}\Vert}\right)$, $\bE_{0} = \left(\bep_{1}^{0},\ldots,\bep_{p}^{0}\right)$ and $\bE_{1} = \left(\bep_{1}^{1},\ldots,\bep_{p}^{1}\right)$. 

By the difference inequality, we have
\begin{equation*}
L^{4}\left(F^{\tilde{\bR}_{1}^{*}},F^{\tilde{\bS}_{1}^{*}}\right) \leq \frac{2}{p}{\rm tr}\left(\left(\tilde{\bR}_{1}^{\bep}-\tilde{\bS}_{1}^{\bep}\right)\left(\tilde{\bR}_{1}^{\bep}-\tilde{\bS}_{1}^{\bep}\right)^{t}\right)\cdot \frac{1}{p}{\rm tr}\left(\tilde{\bR}_{1}^{*}+\tilde{\bS}_{1}^{*}\right) \coloneqq 2\cdot W_{1}\cdot W_{2}. 
\end{equation*} 
For $W_{2} = \frac{1}{p}{\rm tr}\left(\tilde{\bR}_{1}^{*}+\tilde{\bS}_{1}^{*}\right)$, we need to prove $W_{2} \rightarrow C_{1} \; a.s.$, where $C_{1}$ is a positive constant. Note that 
\begin{eqnarray*}
\frac{1}{p}{\rm tr}\left(\tilde{\bS}_{1}^{*}\right) &=& \frac{1}{p}\sum\limits_{j = 1}^{p}\sum\limits_{k = 1}^{p}\frac{{\bep_{j}^{0}}^{t}\bep_{k}^{1} {\bep_{k}^{1}}^{t}\bep_{j}^{0}}{n^{2}} \\
&=& \frac{1}{p}\sum\limits_{j = 1}^{p}\sum\limits_{k = 1}^{p}\frac{\left(\sum\limits_{i = 1}^{n}\epsilon_{i,j}\epsilon_{i+1,k}\right)^{2}}{n^{2}} \\
&=& \frac{1}{p}\sum\limits_{j = 1}^{p}\sum\limits_{k = 1}^{p}\frac{\sum\limits_{i = 1}^{n}\epsilon_{i,ij}^{2}\epsilon_{i+1,k}^{2}}{n^{2}} + \frac{1}{p}\sum\limits_{j = 1}^{p}\sum\limits_{k = 1}^{p}\frac{\sum\limits_{i_{1} \neq i_{2}}^{n}\epsilon_{i_{1},j}\epsilon_{i_{1}+1,k}\epsilon_{i_{2},j}\epsilon_{i_{2}+1,k}}{n^{2}}\\
&\coloneqq& Q_{1} + Q_{2}.
\end{eqnarray*}
For the term $Q_{1}$, if $p/n \rightarrow y >0$ we have 
\begin{equation*}
Q_{1} = \frac{p}{n}\cdot\frac{1}{np^{2}}\sum\limits_{j = 1}^{p}\sum\limits_{k = 1}^{p}\sum\limits_{i = 1}^{n}\epsilon_{i,j}^{2}\epsilon_{i+1,k}^{2} \rightarrow y \cdot 1 \quad a.s.
\end{equation*}
based on the law of large numbers. 

For the term $Q_{2}$,
\begin{eqnarray*}
{\rm E}\left(Q_{2}\right) &=& \frac{1}{pn^{2}}{\rm E}\sum\limits_{j = 1}^{p}\sum\limits_{k = 1}^{p}\sum\limits_{i_{1} \neq i_{2}}^{n}\epsilon_{i_{1},j}\epsilon_{i_{1}-1,k}\epsilon_{i_{2},j}\epsilon_{i_{2}-1,k} \\
&=& \frac{1}{pn^{2}}\sum\limits_{j = 1}^{p}\sum\limits_{k = 1}^{p}\sum\limits_{i_{1} \neq i_{2}}^{n}{\rm E}\left(\epsilon_{i_{1},j}\right){\rm E}\left(\epsilon_{i_{1}-1,k}\right){\rm E}\left(\epsilon_{i_{2},j}\right){\rm E}\left(\epsilon_{i_{2}-1,k}\right) \\
&=& 0.
\end{eqnarray*}
\begin{eqnarray*}
{\rm Var}\left(Q_{2}\right) &=& \frac{1}{p^{2}n^{4}}{\rm E}\left(\sum\limits_{j = 1}^{p}\sum\limits_{k = 1}^{p}\sum\limits_{i_{1} \neq i_{2}}^{n}\epsilon_{i_{1},j}\epsilon_{i_{1}+1,k}\epsilon_{i_{2},j}\epsilon_{i_{2}+1,k}\right)^{2} \\
&=& \frac{1}{p^{2}n^{4}}\sum\limits_{j = 1}^{p}\sum\limits_{k = 1}^{p}\sum\limits_{i_{1} \neq i_{2}}^{n}{\rm E}\left(\epsilon_{i_{1},j}^{2}\right){\rm E}\left(\epsilon_{i_{1}+1,k}^{2}\right){\rm E}\left(\epsilon_{i_{2},j}^{2}\right){\rm E}\left(\epsilon_{i_{2}+1,k}^{2}\right) \\
&=& O\left(\frac{1}{n^{2}}\right).
\end{eqnarray*}
According to Chebyshev's inequality, for any $\epsilon >0$
\begin{equation*}
P(|Q_{2}|>\epsilon) \leq \frac{{\rm Var}\left(Q_{2}\right)}{\epsilon^{2}} = O\left(\frac{1}{n^{2}}\right),
\end{equation*}
which is summable. Hence, based on Borel-cantelli lemma,
\begin{equation*}
Q_{2}\rightarrow 0 \quad a.s.
\end{equation*}
 we have
\begin{equation*}
\frac{1}{p}{\rm tr}\left(\tilde{\bS}_{1}^{*}\right) \rightarrow y \quad a.s.
\end{equation*}

For $\frac{1}{p}{\rm tr}\left(\tilde{\bR}_{1}^{*}\right)$, we obtain that
\begin{eqnarray*}
&&\left\vert\frac{1}{p}{\rm tr}\left(\tilde{\bR}_{1}^{*}\right) - \frac{1}{p}{\rm tr}\left(\tilde{\bS}_{1}^{*}\right)\right\vert \\
&=& \left\vert\frac{1}{p}\sum\limits_{j = 1}^{p}\sum\limits_{k = 1}^{p}\frac{{\bep_{j}^{0}}^{t}\bep_{k}^{1} {\bep_{k}^{1}}^{t}\bep_{j}^{0}}{\Vert \bep_{j}^{0}\Vert^{2} \Vert \bep_{k}^{0}\Vert^{2}}-\frac{1}{p}\sum\limits_{j = 1}^{p}\sum\limits_{k = 1}^{p}\frac{{\bep_{j}^{0}}^{t}\bep_{k}^{1} {\bep_{k}^{1}}^{t}\bep_{j}^{0}}{n^{2}}\right\vert \\
&\leq & \frac{1}{p}\sum\limits_{j = 1}^{p}\sum\limits_{k = 1}^{p}\frac{{\bep_{j}^{0}}^{t}\bep_{k}^{1} {\bep_{k}^{1}}^{t}\bep_{j}^{0}}{n^{2}}\left\vert\frac{n^{2}}{\Vert \bep_{j}^{0}\Vert^{2} \Vert \bep_{k}^{0}\Vert^{2}}-1\right\vert \\
&=& \frac{1}{p}\sum\limits_{j = 1}^{p}\sum\limits_{k = 1}^{p}\frac{{\bep_{j}^{0}}^{t}\bep_{k}^{1} {\bep_{k}^{1}}^{t}\bep_{j}^{0}}{n^{2}}\left\vert \left(\frac{n}{\Vert \bep_{j}^{0}\Vert^{2}}-1\right) \left(\frac{n}{\Vert \bep_{k}^{0}\Vert^{2}}-1\right)+\frac{n}{\Vert \bep_{j}^{0}\Vert^{2}}-1+ \frac{n}{\Vert \bep_{k}^{0}\Vert^{2}}-1 \right\vert \\
&\leq & \frac{1}{p}\sum\limits_{j = 1}^{p}\sum\limits_{k = 1}^{p}\frac{{\bep_{j}^{0}}^{t}\bep_{k}^{1} {\bep_{k}^{1}}^{t}\bep_{j}^{0}}{n^{2}}\left[\left\vert \left(\frac{n}{\Vert \bep_{j}^{0}\Vert^{2}}-1\right)\left(\frac{n}{\Vert \bep_{k}^{0}\Vert^{2}}-1\right)\right\vert+\left\vert\frac{n}{\Vert \bep_{j}^{0}\Vert^{2}}-1\right\vert+\left\vert\frac{n}{\Vert \bep_{k}^{0}\Vert^{2}}-1\right\vert
\right] \\
&\leq & \frac{1}{p}{\rm tr}\left(\tilde{\bS}_{1}^{*}\right)\cdot \left[\left(\max\limits_{1\leq j \leq p}\left\vert\frac{n}{\Vert \bep_{j}^{0}\Vert^{2}}-1\right\vert\right)^{2}+2\max\limits_{1\leq j \leq p}\left\vert\frac{n}{\Vert \bep_{j}^{0}\Vert^{2}}-1\right\vert\right].
\end{eqnarray*}
Since ${\rm E}|\epsilon_{1,1}|^{4} < \infty$, by the Lemma 2 from \citep{bai1993limit}, we know
\begin{equation*}
\max\limits_{1\leq j \leq p}\left\vert\frac{\sum_{i=1}^{n} \epsilon_{i,j}^{2}}{n}-1\right\vert \rightarrow 0 \quad a.s.,
\end{equation*}
and this implies that
\begin{equation*}
\max\limits_{1\leq j \leq p}\left\vert\frac{n}{\sum_{i=1}^{n} \epsilon_{i,j}^{2}}-1\right\vert \rightarrow 0 \quad a.s..
\end{equation*}
Since $\frac{1}{p}{\rm tr}\left(\tilde{\bS}_{1}^{*}\right)$ converges to a constant $y$ which has been shown above,  we have 
\begin{equation*}
\left\vert\frac{1}{p}{\rm tr}\left(\tilde{\bR}_{1}^{*}\right) - \frac{1}{p}{\rm tr}\left(\tilde{\bS}_{1}^{*}\right)\right\vert \rightarrow 0 \quad a.s.
\end{equation*}
It follows that $\frac{1}{p}{\rm tr}\left(\tilde{\bR}_{1}^{*}\right) \rightarrow y \;$ a.s., and then
\begin{equation*}
W_{2} = \frac{1}{p}{\rm tr}\left(\tilde{\bR}_{1}^{*}\right)+\frac{1}{p}{\rm tr}\left(\tilde{\bS}_{1}^{*}\right)\rightarrow 2y\quad a.s.
\end{equation*}
For the term of $W_{1}$, we have
\begin{eqnarray*}
W_{1} &=& \frac{1}{p}{\rm tr}\left(\left(\tilde{\bR}_{1}^{\bep}-\tilde{\bS}_{1}^{\bep}\right)\left(\tilde{\bR}_{1}^{\bep}-\tilde{\bS}_{1}^{\bep}\right)^{t}\right)\\
&=&  \frac{1}{p}\sum\limits_{j = 1}^{p}\sum\limits_{k = 1}^{p}\frac{{\bep_{j}^{0}}^{t}\bep_{k}^{1} {\bep_{k}^{1}}^{t}\bep_{j}^{0}}{n^{2}}- \frac{2}{p}\sum\limits_{j = 1}^{p}\sum\limits_{k = 1}^{p}\frac{{\bep_{j}^{0}}^{t}\bep_{k}^{1} {\bep_{k}^{1}}^{t}\bep_{j}^{0}}{n\cdot \Vert  \bep_{j}^{0}\Vert   \Vert \bep_{k}^{0}\Vert } + \frac{1}{p}\sum\limits_{j = 1}^{p}\sum\limits_{k = 1}^{p}\frac{{\bep_{j}^{0}}^{t}\bep_{k}^{1} {\bep_{k}^{1}}^{t}\bep_{j}^{0}}{\Vert \bep_{j}^{0}\Vert^{2} \Vert  \bep_{k}^{0}\Vert^{2}}\\
&\coloneqq & Q_{3} - 2Q_{4},
\end{eqnarray*}
where 
\begin{eqnarray*}
Q_{3} &=& \frac{1}{p}\sum\limits_{j = 1}^{p}\sum\limits_{k = 1}^{p}\frac{{\bep_{j}^{0}}^{t}\bep_{k}^{1} {\bep_{k}^{1}}^{t}\bep_{j}^{0}}{\Vert \bep_{j}^{0}\Vert^{2} \Vert \bep_{k}^{0}\Vert^{2}} - \frac{1}{p}\sum\limits_{j = 1}^{p}\sum\limits_{k = 1}^{p}\frac{{\bep_{j}^{0}}^{t}\bep_{k}^{1} {\bep_{k}^{1}}^{t}\bep_{j}^{0}}{n^{2}} \\
Q_{4} &=& \frac{1}{p}\sum\limits_{j = 1}^{p}\sum\limits_{k = 1}^{p}\frac{{\bep_{j}^{0}}^{t}\bep_{k}^{1} {\bep_{k}^{1}}^{t}\bep_{j}^{0}}{n\cdot \Vert\bep_{j}^{0}\Vert  \Vert\bep_{k}^{0}\Vert}- \frac{1}{p}\sum\limits_{j = 1}^{p}\sum\limits_{k = 1}^{p}\frac{{\bep_{j}^{0}}^{t}\bep_{k}^{1} {\bep_{k}^{1}}^{t}\bep_{j}^{0}}{n^{2}}.
\end{eqnarray*}
For $Q_{4}$,
\begin{eqnarray*}
&&\left\vert\frac{1}{p}\sum\limits_{j = 1}^{p}\sum\limits_{k = 1}^{p}\frac{{\bep_{j}^{0}}^{t}\bep_{k}^{1} {\bep_{k}^{1}}^{t}\bep_{j}^{0}}{n\cdot \Vert \bep_{j}^{0}\Vert  \Vert\bep_{k}^{0}\Vert}- \frac{1}{p}\sum\limits_{j = 1}^{p}\sum\limits_{k = 1}^{p}\frac{{\bep_{j}^{0}}^{t}\bep_{k}^{1} {\bep_{k}^{1}}^{t}\bep_{j}^{0}}{n^{2}}\right\vert \\
&\leq & \frac{1}{p}\sum\limits_{j = 1}^{p}\sum\limits_{k = 1}^{p}\frac{{\bep_{j}^{0}}^{t}\bep_{k}^{1} {\bep_{k}^{1}}^{t}\bep_{j}^{0}}{n^{2}}\left\vert\frac{n}{\Vert \bep_{j}^{0}\Vert \Vert\bep_{k}^{0}\Vert}-1\right\vert \\
&=& \frac{1}{p}\sum\limits_{j = 1}^{p}\sum\limits_{k = 1}^{p}\frac{{\bep_{j}^{0}}^{t}\bep_{k}^{1} {\bep_{k}^{1}}^{t}\bep_{j}^{0}}{n^{2}}\left\vert \left(\frac{\sqrt{n}}{\sqrt{\Vert \bep_{j}^{0}\Vert^{2}}}-1\right)\left(\frac{\sqrt{n}}{\sqrt{\Vert \bep_{k}^{0}\Vert^{2}}}-1\right)+\frac{\sqrt{n}}{\sqrt{\Vert \bep_{j}^{0}\Vert^{2}}}-1+ \frac{\sqrt{n}}{\sqrt{\Vert \bep_{k}^{0}\Vert^{2}}}-1 \right\vert \\
&\leq & \frac{1}{p}{\rm tr}\left(\tilde{\bS}_{1}^{*}\right)\cdot \left[\left(\max\limits_{1\leq j \leq p}\left\vert\frac{\sqrt{n}}{\sqrt{\Vert \bep_{j}^{0}\Vert^{2}}}-1\right\vert\right)^{2}+2\max\limits_{1\leq j \leq p}\left\vert\frac{\sqrt{n}}{\sqrt{\Vert \bep_{j}^{0}\Vert^{2}}}-1\right\vert\right].
\end{eqnarray*}
According to Lemma 2 of \citep{bai1993limit}, we have
\begin{equation*}
\max\limits_{1\leq j \leq p}\left\vert\frac{\sqrt{n}}{\sqrt{\sum_{i=1}^{n} \epsilon_{i,j}^{2}}}-1\right\vert \rightarrow 0 \quad a.s.
\end{equation*}
Therefore $Q_{4}\rightarrow 0 \;$ a.s. Given that the following result
\begin{equation*}
Q_{3} = \frac{1}{p}{\rm tr}\left(\tilde{\bR}_{1}^{*}\right) - \frac{1}{p}{\rm tr}\left(\tilde{\bS}_{1}^{*}\right) \rightarrow 0 \quad a.s.
\end{equation*}
has been proved, we have 
\begin{equation*}
W_{1} = \frac{1}{p}{\rm tr}\left(\left(\tilde{\bR}_{1}^{\bep}-\tilde{\bS}_{1}^{\bep}\right)(\left(\tilde{\bR}_{1}^{\bep}-\tilde{\bS}_{1}^{\bep}\right)^{t}\right) \rightarrow 0\quad a.s.
\end{equation*}
Together with $W_{1},W_{2}$, we obtain
\begin{equation*}
L^{4}\left(F^{\tilde{\bR}_{1}^{*}},F^{\tilde{\bS}_{1}^{*}}\right) \rightarrow 0 \quad a.s.
\end{equation*} 
The procedure of the proof will not change for any given positive integer $\tau$. Therefore, we have
\begin{equation*}
 L^{4}\left(F^{\tilde{\bR}_{\tau}^{*}},F^{\tilde{\bS}_{\tau}^{*}}\right) \rightarrow 0 \quad a.s.
\end{equation*}
\end{proof}
\subsection{Proof of Theorem 2.2}
\begin{lemma}
Under the assumptions in \autoref{Theorem1}, a let $\lambda_{\rm max}({\tilde{\bS}_{\tau}^{*}})$ and $\lambda_{\rm max}({\tilde{\bR}_{\tau}^{*}})$ be the largest eigenvalues of $\tilde{\bS}_{\tau}^{*}$ and $\tilde{\bR}_{\tau}^{*}$, respectively. As $p,n \rightarrow \infty$,  we have
\begin{equation*}
\vert\sqrt{\lambda_{\rm max}(\tilde{\bR}_{\tau}^{*})}-\sqrt{\lambda_{\rm max}(\tilde{\bS}_{\tau}^{*})}\vert\rightarrow 0 \quad a.s. 
\end{equation*}
\end{lemma}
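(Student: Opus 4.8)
Throughout write $\|\cdot\|$ for the spectral (operator) norm of a matrix, so that $\|M\|=\sqrt{\lambda_{\rm max}(MM^{t})}$ for any square matrix $M$; in particular $\sqrt{\lambda_{\rm max}(\tilde{\bR}_{\tau}^{*})}=\|\tilde{\bR}_{\tau}^{\bep}\|$ and $\sqrt{\lambda_{\rm max}(\tilde{\bS}_{\tau}^{*})}=\|\tilde{\bS}_{\tau}^{\bep}\|$. The plan is to reduce the claim, via the reverse triangle inequality $\big|\,\|\tilde{\bR}_{\tau}^{\bep}\|-\|\tilde{\bS}_{\tau}^{\bep}\|\,\big|\le\|\tilde{\bR}_{\tau}^{\bep}-\tilde{\bS}_{\tau}^{\bep}\|$, to the single estimate
\[
\big\|\tilde{\bR}_{\tau}^{\bep}-\tilde{\bS}_{\tau}^{\bep}\big\|\rightarrow 0\quad a.s.
\]

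Next, I would set $\bD={\rm diag}(\tilde{\bS}_{0}^{\bep})={\rm diag}\!\big(\|\bep_{1}^{0}\|^{2}/n,\ldots,\|\bep_{p}^{0}\|^{2}/n\big)$ and use the factorization $\tilde{\bR}_{\tau}^{\bep}=\bD^{-1/2}\tilde{\bS}_{\tau}^{\bep}\bD^{-1/2}$ to write
\[
\tilde{\bR}_{\tau}^{\bep}-\tilde{\bS}_{\tau}^{\bep}=(\bD^{-1/2}-\bI)\,\tilde{\bS}_{\tau}^{\bep}\,\bD^{-1/2}+\tilde{\bS}_{\tau}^{\bep}\,(\bD^{-1/2}-\bI),
\]
so that by submultiplicativity of $\|\cdot\|$,
\[
\big\|\tilde{\bR}_{\tau}^{\bep}-\tilde{\bS}_{\tau}^{\bep}\big\|\;\le\;\big(\|\bD^{-1/2}\|+1\big)\,\big\|\bD^{-1/2}-\bI\big\|\,\big\|\tilde{\bS}_{\tau}^{\bep}\big\|.
\]
Since $\bD^{-1/2}$ is diagonal, $\|\bD^{-1/2}-\bI\|=\max_{1\le j\le p}\big|\sqrt{n}/\|\bep_{j}^{0}\|-1\big|$ and $\|\bD^{-1/2}\|=\max_{1\le j\le p}\sqrt{n}/\|\bep_{j}^{0}\|$, and by Lemma~2 of \citep{bai1993limit} (already invoked in \autoref{lemma1}) these converge a.s.\ to $0$ and $1$ respectively under the fourth-moment bound of Assumption (A). The remaining factor $\|\tilde{\bS}_{\tau}^{\bep}\|=\sqrt{\lambda_{\rm max}(\tilde{\bS}_{\tau}^{*})}$ is a.s.\ bounded: combining the equivalence lemma asserting that $\lambda_{\rm max}(\tilde{\bS}_{\tau}^{*})$ and $\lambda_{\rm max}(\bS_{\tau}^{*})$ have the same a.s.\ limit with the finiteness of that limit established in \citep{wang2016moment} gives $\limsup_{n}\|\tilde{\bS}_{\tau}^{\bep}\|<\infty$ a.s. Multiplying the three a.s.\ bounds yields $\|\tilde{\bR}_{\tau}^{\bep}-\tilde{\bS}_{\tau}^{\bep}\|\rightarrow 0$ a.s., and since none of the steps depends on the value of $\tau$, the conclusion holds for every fixed $\tau\ge1$.

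\textbf{Expected main obstacle.} The argument is essentially bookkeeping; the only substantive input is the uniform maximal-deviation bound $\max_{1\le j\le p}\big|\sqrt{n}/\|\bep_{j}^{0}\|-1\big|\rightarrow 0$ a.s., which is exactly the estimate from \citep{bai1993limit} used in the proof of \autoref{lemma1} and needs only Assumption (A). The point to watch is to invoke nothing more than the a.s.\ \emph{boundedness} of $\|\tilde{\bS}_{\tau}^{\bep}\|$ here (not its exact limit), so that this lemma stays logically upstream of \autoref{Theorem2}; as an alternative one could reproduce the trace estimate of \autoref{lemma1} for the top eigenvalue, but the spectral-norm route above is markedly shorter.
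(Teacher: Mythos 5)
Your proposal is correct and follows essentially the same route as the paper: both reduce the claim via the reverse triangle inequality for the spectral norm (the paper cites Corollary 7.3.8 of Horn and Johnson), decompose $\tilde{\bR}_{\tau}^{\bep}-\tilde{\bS}_{\tau}^{\bep}$ into terms each carrying a factor of the diagonal perturbation, control that perturbation by the maximal-deviation bound from Lemma 2 of Bai and Yin (1993), and control $\Vert\tilde{\bS}_{\tau}^{\bep}\Vert_{2}$ via the Wang--Yao result on $\lambda_{\rm max}(\tilde{\bS}_{\tau}^{*})$. The only cosmetic differences are your choice of decomposition (factoring out $\tilde{\bS}_{\tau}^{\bep}$ rather than $\tfrac{1}{n}\bE_{0}^{t}\bE_{\tau}$ with the paper's $\bD$) and your deliberate use of only the a.s.\ boundedness of $\Vert\tilde{\bS}_{\tau}^{\bep}\Vert_{2}$ where the paper invokes its exact limit.
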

\begin{proof}
Denote $\bep_{j}^{0} = \left(\epsilon_{1,j},\ldots,\epsilon_{n,j}\right)^{t}$, $\bep_{j}^{\tau} = \left(\epsilon_{\tau,j},\ldots,\epsilon_{n+\tau,j}\right)^{t}$. Rewrite
\begin{equation*}
\tilde{\bR}_{\tau}^{\bep} = \frac{1}{n}\bD\bE_{0}^{t}\bE_{\tau}\bD \qand \tilde{\bS}_{\tau}^{\bep} = \frac{1}{n}\bE_{0}^{t}\bE_{\tau}
\end{equation*}
where $\bD = {\rm diag}\left(\frac{\sqrt{n}}{\Vert \bep_{1}^{0}\Vert},\ldots,\frac{\sqrt{n}}{\Vert \bep_{p}^{0}\Vert}\right)$, $\bE_{0} = \left(\bep_{1}^{0},\ldots,\bep_{p}^{0}\right)$ and $\bE_{\tau} = \left(\bep_{1}^{\tau},\ldots,\bep_{p}^{\tau}\right)$. 

Under the conditions of \autoref{Theorem1}, according to Theorem 4.1 from \citep{wang2016moment}, we have
\begin{equation}\label{111}
\lambda_{{\rm max}}(\tilde{\bS}_{\tau}^{*})\rightarrow  b \quad a.s.,
\end{equation}
where $b = \frac{1}{8}\left(-1+20y+8y^{2}+\left(1+8y\right)^{3/2}\right)$ is the right end point of the support of the LSD of  $\bR_{\tau}^{*}$. Our target is to show that 
\begin{equation}\label{eee}
\vert\sqrt{\lambda_{\rm max}(\tilde{\bR}_{\tau}^{*})}-\sqrt{\lambda_{\rm max}(\tilde{\bS}_{\tau}^{*})}\vert\rightarrow 0 \quad a.s. 
\end{equation}
For any matrix $\bA$, we denote $\Vert \bA\Vert_{2}$ as the spectrum norm of $\bA$, which is defined as the square root of the largest eigenvalue of $\bA\bA^{t}$. By Corollary 7.3.8 from \citep{horn1985matrix}, we have
\begin{equation*}
\vert\sqrt{\lambda_{\rm max}(\tilde{\bR}_{\tau}^{*})}-\sqrt{\lambda_{\rm max}(\tilde{\bS}_{\tau}^{*})}\vert \leq  \Vert \tilde{\bR}_{\tau} - \tilde{\bS}_{\tau}\Vert_{2}.
\end{equation*}
Meanwhile the spectrum norm satisfies the triangle inequality and $\Vert\bA\bC\Vert_{2}\leq \Vert \bA\Vert_{2}\cdot\Vert \bC\Vert_{2}$ for any $\bA$ and $\bC$, then we have
\begin{eqnarray}\label{1}
&& \Vert \tilde{\bR}_{\tau} - \tilde{\bS}_{\tau}\Vert_{2} \notag \\
&=& \Vert \frac{1}{n}\bD\bE_{0}^{t}\bE_{\tau}\bD - \frac{1}{n}\bE_{0}^{t}\bE_{\tau}\Vert_{2} \notag  \\
&=& \Vert \frac{1}{n}\bD\bE_{0}^{t}\bE_{\tau}\bD - \frac{1}{n}\bD\bE_{0}^{t}\bE_{\tau} +\frac{1}{n}\bD\bE_{0}^{t}\bE_{\tau} - \frac{1}{n}\bE_{0}^{t}\bE_{\tau}\Vert_{2} \notag \\
&\leq & \Vert \frac{1}{n}\bD\bE_{0}^{t}\bE_{\tau}\bD - \frac{1}{n}\bD\bE_{0}^{t}\bE_{\tau}\Vert_{2} +\Vert \frac{1}{n}\bD\bE_{0}^{t}\bE_{\tau} - \frac{1}{n}\bE_{0}^{t}\bE_{\tau}\Vert_{2}\notag  \\
& = & \Vert \frac{1}{n}\left(\bD-\bI+\bI\right)\bE_{0}^{t}\bE_{\tau}\left(\bD-\bI\right)\Vert_{2} +\Vert \frac{1}{n}\left(\bD-\bI\right)\bE_{0}^{t}\bE_{\tau}\Vert_{2}\notag  \\
& \leq & \Vert \frac{1}{n}\left(\bD-\bI\right)\bE_{0}^{t}\bE_{\tau}\left(\bD-\bI\right)\Vert_{2} +2\Vert \frac{1}{n}\left(\bD-\bI\right)\bE_{0}^{t}\bE_{\tau}\Vert_{2}\notag  \\
&\leq & \Vert \frac{1}{n}\bE_{0}^{t}\bE_{\tau}\Vert_{2} \cdot\Vert \bD-\bI \Vert_{2}^{2} + 2\Vert \frac{1}{n}\bE_{0}^{t}\bE_{\tau}\Vert_{2} \cdot\Vert \bD-\bI \Vert_{2}. 
\end{eqnarray}
Since ${\rm E}|\epsilon_{1,1}|^{4} < \infty$, by Lemma 2 of \citep{bai1993limit}, we know that
\begin{equation*}
\max\limits_{1\leq j \leq p}\left\vert\frac{\Vert \bep_{j}^{0}\Vert^{2}}{n}-1\right\vert \rightarrow 0 \quad a.s.,
\end{equation*}
which implies 
\begin{equation*}
\Vert \bD-\bI \Vert_{2} = \max\limits_{1\leq j \leq p}\left\vert \frac{\sqrt{n}}{\Vert \bep_{j}^{0}\Vert} - 1\right\vert\rightarrow 0\quad a.s.
\end{equation*}
This together with \autoref{111} and \autoref{1} proves \autoref{eee}.
\end{proof}

\bibliographystyle{imsart-nameyear}
\bibliography{Ref}
\end{document}